\documentclass{article}
\usepackage{amsmath,amssymb,amsthm,color}

\theoremstyle{definition}
\newtheorem{theorem}{Theorem}[section]
\newtheorem{lemma}[theorem]{Lemma}
\newtheorem{remark}[theorem]{Remark}
\newtheorem{definition}[theorem]{Definition}

\newtheorem{proposition}[theorem]{Proposition}

\makeatletter

\@addtoreset{equation}{section}
\makeatother

\title{A note on the volume entropy of harmonic manifolds of hypergeometric type\thanks{This paper has been submitted to Results in Mathematics.}}

\author{Hiroyasu Satoh
\thanks{Liberal Arts and Sciences, Nippon Institute of Technology, 4-1 Gakuendai, Miyashiro-machi, Minamisaitama-gun, Saitama 345-8501 JAPAN\quad
e-mail : hiroyasu@nit.ac.jp}}

\date{\today}

\begin{document}

\maketitle

\begin{abstract}
Harmonic manifolds of hypergeometric type form a class of non-compact harmonic manifolds that includes rank one symmetric spaces of non-compact type and Damek-Ricci spaces.
When normalizing the metric of a harmonic manifold of hypergeometric type to satisfy the Ricci curvature $\mathrm{Ric} = -(n-1)$, we show that the volume entropy of this manifold satisfies a certain inequality.
Additionally, we show that manifolds yielding the upper bound of volume entropy are only real hyperbolic spaces with sectional curvature $-1$, while examples of Damek-Ricci spaces yielding the lower bound exist in only four cases.
\smallskip

\noindent\textbf{Keywords:} harmonic manifold, volume entropy, Damek-Ricci space
\smallskip

\noindent\textbf{MSC Classification:} 53C21, 53C25, 53C20 
\end{abstract}

\section{Introduction and Main Results}

Let $(M, g)$ be a complete and simply connected Riemannian manifold.
A manifold $(M, g)$ is called a harmonic manifold when expressed in normal coordinates centered at an arbitrary point $p \in M$, the volume density function $\sqrt{\det(g_{ij})}$ depends only on the distance $r = d(p, \cdot)$ from the center $p$.
The study of harmonic manifolds originated from the existence problem of solutions to the Laplace equation $\Delta f = 0$ that are non-constant and depend only on the distance from an arbitrary fixed point.
Here $\Delta$ is the Laplace-Beltrami operator on $M$.
There are several equivalent statements for the definition of harmonic manifolds.
Please refer to \cite{RWW,Besse,Sz} for details.
The Euclidean spaces and the rank one symmetric spaces are typical harmonic manifolds, suggesting the local symmetricity conjecture posed by Lichnerowicz \cite{L}.
Walker \cite{Wa} proved that the conjecture holds true in the case of four dimensions.
Moreover, it is known that 5-dimensional harmonic manifolds are spaces of constant curvature (\cite[Theorem 1]{Ni}).
The conjecture is affirmative in the compact case from the results by \cite{Sz, BCG}.
However, there exist non-symmetric harmonic manifolds among the class of Damek-Ricci spaces \cite{DR}.
A Damek-Ricci space is a one-dimensional extension of a generalized Heisenberg group equipped with a left-invariant Riemannian metric.
These spaces constitute a class of harmonic Hadamard manifolds that includes all rank-one symmetric spaces of non-compact type except real hyperbolic spaces.
The lowest dimension for a non-symmetric Damek-Ricci space is seven  \cite[p.110]{BTV}.
Furthermore, it is known that homogeneous harmonic manifolds are either flat Euclidean spaces,
rank-one symmetric spaces, or Damek-Ricci spaces \cite{He}.
Harmonic manifolds of polynomial volume growth are flat \cite{RS}.
In \cite{K}, it is claimed that harmonic manifolds with subexponential volume growth also become flat spaces.
From the above, the existence of non-compact, non-homogeneous harmonic manifolds with exponential volume growth in dimensions $\ge 6$ remains an open problem (see \cite{K}, \cite[p.110]{BTV}).

In \cite{Itoh-S2019},
M. Itoh and the author defined spaces of \textit{hypergeometric type}.
These are spaces where the eigenfunctions of the Laplace-Beltrami operator can be described by hypergeometric functions,
and they can also be interpreted as spaces where the theory of the spherical Fourier transform works well.
This means that inverse transforms can be defined, and properties such as the Plancherel theorem hold true.
Let $f(x)$ be a smooth radial function of compact support on $M$, i.e., there exists a point $p\in M$ and an even function $F : \mathbb{R}\rightarrow \mathbb{R}$ such that $f(x)=F(r(x))$, where $r=d(p,\,\cdot\,)$ is the distance function from $p$. 
Then, we define the spherical Fourier transform $\hat{f} : \mathbb{C} \rightarrow \mathbb{C}$ of $f$ by
\begin{equation}\label{sphrclFT}
\hat{f}(\lambda)=\int_{x\in M} f(x)\,\varphi_\lambda(x)\,dv_M(x)
=\omega_{n-1} \int_0^\infty F(r)\,\Phi_\lambda(r)\,\Theta(r)\,dr,
\end{equation}
where $\Theta(r)$ is the volume density function of the geodesic sphere with radius $r$ and $\omega_{n-1}= \mathrm{vol}(S^{n-1}(1))=\dfrac{2\sqrt{\pi^n}}{\Gamma\left(\frac{n}{2}\right)}$.
The function $\varphi_\lambda(x)=\Phi_\lambda(r(x))$ is called a \textit{spherical function} which is a radial eigenfunction of the Laplace-Beltrami operator $\Delta$ on $M$, normalized by $\varphi_\lambda(p)=\Phi_\lambda(0)=1$.
Anker et al. \cite{ADY} state that the spherical Fourier transforms on Damek-Ricci spaces fit into the general framework of the Jacobi transform (see \cite{Koo}).
In this framework, various analyses become possible due to the representation of spherical functions by hypergeometric functions.
This arises from the fact that the eigenfunction equation of the radial part of the Laplace-Beltrami operator on a Damek-Ricci space
\begin{equation}\label{LEeq}
(\,\Delta^{\mathrm{rad}} \Phi(r) =\,)\ -\left(\dfrac{d^2}{dr^2}+\sigma(r)\dfrac{d}{dr}\right) \Phi(r)=\left(\dfrac{Q^2}{4}+\lambda^2\right) \Phi(r)
\end{equation}
turns into the hypergeometric equation
\begin{equation}\label{HGeq}
z(1-z) \dfrac{d^2f}{dz^2}(z)+\left\{c-(a+b+1)z\right\}\dfrac{df}{dz}(z)-a b f(z)=0
\end{equation}
when the variables are changed.
Motivated by this fact, we gave the definition of hypergeometric type (Definition \ref{HMHGT}).
Harmonic manifolds of hypergeometric type form a class of non-compact harmonic manifolds that includes rank one symmetric spaces of non-compact type and Damek-Ricci spaces.
The definition of hypergeometric type is equivalent to the volume density function being describable in a certain form (Theorem  \ref{volumedensmeancurv}, \ref{voldensfctheoremHGT}).

In this note, we improve the definition slightly to ensure that the properties of harmonic manifolds of hypergeometric type are not compromised even when the metric is rescaled.
We show that the volume entropy is bounded by certain constants from above and below.
The volume entropy is not invariant under rescaling of the metric.
Therefore, we normalize the metric with respect to the Ricci tensor and then evaluate the volume entropy.
Our main theorem is as follows.

\begin{theorem}\label{maintheorem}
Let $(M^n, g)$ be a non-compact harmonic manifold of dimension $n$, whose volume entropy $Q$ is positive.
Assume that $(M, g)$ is of hypergeometric type and the metric is rescaled so that $\mathrm{Ric}=-(n-1)$.
Then we show that $Q$ satisfies
\begin{equation}\label{volumeentropy-1}
\frac{2\sqrt{2}}{3}(n-1)\le Q \le (n-1)
\end{equation}
and $Q = (n-1)$ holds if and only if $(M, g)$ is isometric to the real hyperbolic space $\mathbb{R}H^n(-1)$ of constant sectional curvature $-1$.
\end{theorem}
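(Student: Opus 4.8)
The plan is to reduce the entire statement to the explicit shape of the volume density. By Theorem \ref{voldensfctheoremHGT}, a harmonic manifold of hypergeometric type has, for a suitable scale of the metric, a volume density that can be put in the form
\[
\Theta(r)=\left(2\sinh\tfrac{r}{2}\right)^{p+q}\left(2\cosh\tfrac{r}{2}\right)^{q},\qquad p+q=n-1,\quad q\ge 0,
\]
where $p,q$ are the data playing the role of the root multiplicities in the Damek-Ricci case and $q=0$ is the degenerate (real hyperbolic) case. From $\sigma(r)=\Theta'(r)/\Theta(r)=\tfrac{p+q}{2}\coth\tfrac r2+\tfrac q2\tanh\tfrac r2$ I would read off the volume entropy as the asymptotic mean curvature, $Q=\lim_{r\to\infty}\sigma(r)=\tfrac{p+2q}{2}$, which is legitimate since $\mathrm{vol}(B(R))=\omega_{n-1}\int_0^R\Theta(r)\,dr$. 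Expanding $\sigma$ at $r=0$ and comparing with the universal expansion $\sigma(r)=\tfrac{n-1}{r}-\tfrac13\,\mathrm{Ric}(\partial_r,\partial_r)\,r+\cdots$, together with the fact that harmonic manifolds are Einstein, yields the Einstein constant $\mathrm{Ric}=-\tfrac{p+4q}{4}\,g$.

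Next I would implement the Ricci normalization. Writing $\tilde g=c^{2}g$, the Ricci tensor is unchanged as a $(0,2)$-tensor, so the Einstein constant is divided by $c^{2}$, whereas the entropy, having the dimension of inverse length, is divided by $c$. Solving $-\tfrac{p+4q}{4c^{2}}=-(n-1)$ gives $c=\tfrac12\sqrt{(p+4q)/(p+q)}$, hence the normalized entropy
\[
Q=\frac{1}{c}\cdot\frac{p+2q}{2}=(p+2q)\sqrt{\frac{p+q}{p+4q}},\qquad\text{so}\qquad \frac{Q}{n-1}=\frac{p+2q}{\sqrt{(p+q)(p+4q)}}.
\]
This converts the theorem into a one-variable inequality in $x=q/p\ge 0$, since $\tfrac{Q^{2}}{(n-1)^{2}}=\tfrac{(1+2x)^{2}}{(1+x)(1+4x)}$.

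For the upper bound I would use the algebraic identity $(p+q)(p+4q)-(p+2q)^{2}=pq\ge 0$, which gives $Q\le n-1$ at once, with equality precisely when $pq=0$. For the lower bound I would write $\frac{Q^{2}}{(n-1)^{2}}=1-\dfrac{pq}{(p+q)(p+4q)}=1-\dfrac{1}{4x+5+1/x}$ and apply AM–GM in the form $4x+\tfrac1x\ge 4$; this bounds the subtracted term by $\tfrac19$, yielding $\frac{Q}{n-1}\ge\sqrt{8/9}=\tfrac{2\sqrt2}{3}$, with equality exactly at $x=\tfrac12$, i.e. $p=2q$ (the locus that the abstract then examines case by case).

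Finally, for the rigidity statement, equality $Q=n-1$ forces $pq=0$; since $p>0$ this gives $q=0$, so $\Theta(r)=(2\sinh\tfrac r2)^{n-1}$, which after the corresponding rescaling becomes the density $\sinh^{n-1}(\,\cdot\,)$ of $\mathbb{R}H^{n}(-1)$, and conversely that space is harmonic of hypergeometric type realizing this data. The step I expect to be the main obstacle is not the inequalities, which are elementary, but securing the admissible parameter range from the definition of hypergeometric type: it is exactly the nonnegativity $q\ge 0$ of the $\cosh$-exponent, guaranteed by Theorem \ref{voldensfctheoremHGT}, that makes the upper bound hold, since $q<0$ would reverse it. A secondary point requiring care is the bookkeeping of the rescaling, because $Q$ is not scale invariant while the Einstein condition fixes the scale only through the transformation law of the Einstein constant; one must track the two different weights, $c$ for $Q$ and $c^{2}$ for $\mathrm{Ric}$, consistently.
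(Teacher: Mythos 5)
Your algebraic machinery is correct, and your lower-bound argument is essentially the paper's own proof in different coordinates: the paper expands $\Theta$ and uses Ledger's formula \eqref{ledgerformulaelse} to get $Q=\bigl(\ell+\tfrac{2}{\ell}\bigr)\tfrac{n-1}{3}$ at the normalized scale and then applies AM--GM, which is exactly your estimate $4x+\tfrac1x\ge 4$ after a change of parameter; your identities $Q_0=\tfrac{p+2q}{2}$, $\mathrm{Ric}=-\tfrac{p+4q}{4}\,g$, and the rescaling bookkeeping ($c$ for $Q$, $c^2$ for the Einstein constant) all check out. (A harmless slip: your normalizing constant should be $2^{p+q}$, not $2^{p+2q}$, to have $\Theta(r)/r^{n-1}\to 1$; this does not affect $\sigma=\Theta'/\Theta$ or anything you use.)

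The genuine gap is exactly at the point you flagged as the ``main obstacle,'' and your resolution of it is wrong: Theorem \ref{voldensfctheoremHGT} does \emph{not} guarantee $q\ge 0$. Its hypotheses are only $c_1>0$ and $c_1+c_2>0$, which in your notation read $p+q=n-1>0$ and $p+2q>0$ (i.e.\ $Q_0>0$); the $\cosh$-exponent $c_2=q$ is explicitly allowed to be negative (and likewise nothing forces $p\ge 0$). Worse, since $(p+q)(p+4q)-(p+2q)^2=pq$, the condition $pq\ge 0$ you assume is \emph{equivalent}, under the Ricci normalization, to the inequality $Q\le n-1$ you are trying to prove, so the upper-bound argument is circular. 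The missing ingredient is a genuinely geometric input, and it is what the paper uses: by the Bishop volume comparison theorem, $\mathrm{Ric}=-(n-1)$ forces $\Theta(r)\le(\sinh r)^{n-1}$, hence $Q\le n-1$, and the equality case of that comparison theorem \cite[IV, Corollary 3.2 (2)]{Sakai} gives the rigidity. Your rigidity step has a second, related gap: even granting $pq=0$, you conclude that $(M,g)$ is isometric to $\mathbb{R}H^n(-1)$ merely because its volume density agrees with $\sinh^{n-1}r$. Equality of volume densities does not imply isometry for harmonic manifolds --- for instance the quaternionic hyperbolic space $\mathbb{H}H^3$ and the non-symmetric Damek--Ricci space with $(k,m)=(8,3)$ have identical density functions but are not isometric (cf.\ \cite{BTV}) --- so here too one must invoke the equality case of the comparison theorem rather than the formula for $\Theta$. (Incidentally, the clause ``since $p>0$'' in your last step is also unjustified, though immaterial: the parametrization is redundant, and $p=0$ again yields the density of $\mathbb{R}H^n(-1)$.) The lower bound, by contrast, survives your gap, since when $pq<0$ one gets $Q>n-1>\tfrac{2\sqrt2}{3}(n-1)$ trivially.
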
 

\begin{remark}
The general characterization of harmonic manifolds of hypergeometric type for which the volume entropy satisfies $Q=\frac{2\sqrt{2}}{3}(n-1)$ remains unknown.
However, considering the context of Damek-Ricci spaces, 
it is verified that there are only four examples that satisfy that equation.
This will be discussed in section \ref{DR_4ex}.
\end{remark}

From the above considerations, two problems arise.
One is whether there exist non-compact harmonic manifolds with $Q \neq 0$ that are not of hypergeometric type.
Given that the value of $\frac{2\sqrt{2}}{3}$ is very close to 1,
there could exist harmonic manifolds satisfying $\mathrm{Ric}=-(n-1)$ and $0 < Q < \frac{2\sqrt{2}}{3}(n-1)$.
If the existence of harmonic manifolds with volume entropy $Q$ in this range could be shown,
it would represent new examples of harmonic manifolds not previously known.
On the other hand, it is also worth considering the possibility of an isolation phenomenon regarding volume entropy.
Specifically, one may ask whether there exists a constant $\bar{Q}_n$ satisfying the following property:
There exists no harmonic manifold $(M^n, g)$ satisfying $\mathrm{Ric}=-(n-1)$ with volume entropy $Q$ such that $0 < Q < \bar{Q}_n$.
At present, this remains an open question.
In this paper, we don't delve into a detailed analysis of these issues but merely mention them as possible directions for future investigation.

\begin{remark}
It should be noted that the discussion in \cite{Itoh-S2019, Itoh-S2020} assumes that the manifold is Hadamard, but it is worth mentioning that the arguments can be extended to more general non-compact harmonic manifolds.
\end{remark}

This note is organized as follows.
In Section 2, basic preliminaries for Riemannian manifolds, the Busemann function and harmonic manifolds are given.
In Section 3, we present a revised definition of harmonic manifolds of hypergeometric type and show that the density function can be expressed in a certain form.
In Section 4, we provide the proof of the main theorem,
and in Section 5, we discuss the existence of Damek-Ricci spaces that provide the lower bound for \eqref{volumeentropy-1}.

\section{Preliminaries}

\subsection{Riemannian manifolds and geodesic spheres}\label{S21_Riemmfd}

We recall basic geometric notions of a Riemannian manifold.
Let $(M^n,g)$ be an $n$-dimensional complete Riemannian manifold and $\nabla$ be the Levi-Civita connection of $g$.
The Riemannian curvature tensor and the Ricci tensor of $g$ are denoted by
\begin{equation*}
R(X, Y)Z=\nabla_X(\nabla_Y Z)-\nabla_Y(\nabla_X Z)-\nabla_{[X, Y]}Z,
\end{equation*}
\begin{equation*}
\mathrm{Ric}(Y, Z)=\mathrm{trace}\{X\mapsto R(X, Y)Z\}.
\end{equation*}

\begin{remark}
The Ricci tensor is invariant under the rescaling of the metric.
That is, if we denote the Ricci tensor of $g$ as $\mathrm{Ric}_g$, then $\mathrm{Ric}_g=\mathrm{Ric}_{c^2 g}$ holds for any $c>0$.
\end{remark}

Let $\{r, \theta^i,\, i= 1,\ldots,n-1\}$ be geodesic polar coordinates around $p\in M$.
The Laplace-Beltrami operator $\Delta$ at a point $q= \exp_p r u$, $u\in U_p M$ is represented by
\begin{equation}\label{radiallaplace}
\Delta = - \left( \frac{\partial^2}{\partial r^2} + \sigma_p(\exp_p ru) \frac{\partial}{\partial r}\right) + {\tilde{\Delta}}
\end{equation}
where $\sigma_p(q)$ is the mean curvature of the geodesic sphere $S(p;r)$ at $q$ with respect to the inward unit normal vector field $\xi=-\nabla r$ and ${\tilde{\Delta}}$ is the Laplace-Beltrami operator on $S(p;r)$ at $q$ (see \cite[(1.2)]{Sz}).
Here $U_p M$ denotes the set of all unit tangent vectors at $p$.

Let $\gamma$ be a geodesic given by $\gamma(t):= \exp_p tu$, where $u\in U_p M$  and let $\{E_i= E_i(t), \, i=1,\ldots,n\}$ be a parallel orthonormal frame field along $\gamma$ with $E_1(t) = \gamma'(t)$.
Here the prime means the covariant derivative along $\gamma$.
Let $Y_i(t)$, $i=2,\ldots,n$ be a perpendicular Jacobi vector field along $\gamma$ for $t\geq 0$ satisfying $Y_i(0)=0$ and $Y'_i(0)=E_i(0)$.
Here a Jacobi field $Y(t)$ along $\gamma$ is the vector field along $\gamma$ which satisfies $Y''(t)+R(t) Y(t)=O$, where $R(t):=R_{\gamma'(t)}$ is the Jacobi operator $X\mapsto R(X, \gamma'(t))\gamma'(t)$.
Then, the square root determinant
\begin{equation}\label{squarerootdet}
\Theta_p(\exp_p tu) := 
\sqrt{\det\left(g(Y_i(t),Y_j(t)) \right)}_{2\leq i,j\leq n}
\end{equation}
yields the volume density of $S(p;t)$ (see \cite[p.166--167]{GHL}).

\begin{lemma}\label{dens_meancurv}
We assume that $p=\gamma(0)$ and $\gamma(t)=\exp_p(tu)$ for any $t\in (0, T)$ are not conjugate along $\gamma$.
Then, we have
\begin{equation*}
\sigma_p(\exp_p tu) = \frac{\frac{\partial}{\partial t}\Theta_p(\exp_p tu)}{\Theta_p(\exp_p tu)},\quad
0< t < T.
\end{equation*}
\end{lemma}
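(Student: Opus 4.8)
The plan is to compute the logarithmic derivative of $\Theta_p$ directly from \eqref{squarerootdet} and to identify it with the trace of the Weingarten map of the geodesic sphere. First I would expand each Jacobi field in the parallel orthonormal frame, writing $Y_i(t)=\sum_{k=2}^n a_{ki}(t)\,E_k(t)$ and collecting the coefficients into the $(n-1)\times(n-1)$ matrix $A(t)=(a_{ki}(t))$. Since the $E_k$ are parallel and orthonormal, $g(Y_i(t),Y_j(t))=(A(t)^\top A(t))_{ij}$, so that $\Theta_p(\exp_p tu)=|\det A(t)|$. The no-conjugate-point hypothesis on $(0,T)$ is exactly what guarantees $\det A(t)\neq 0$, hence $A(t)$ is invertible and $\Theta_p>0$ there. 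Applying Jacobi's formula then yields $\dfrac{\partial_t\Theta_p}{\Theta_p}=\dfrac{d}{dt}\log|\det A(t)|=\mathrm{tr}\bigl(A(t)^{-1}A'(t)\bigr)$.

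Next I would bring in the shape operator. Let $S(t)$ denote the Weingarten map of the geodesic sphere $S(p;t)$ at $\gamma(t)$ associated with the inward unit normal $\xi=-\nabla r$, so that by definition $\sigma_p(\exp_p tu)=\mathrm{tr}\bigl(S(t)\bigr)$. The geometric input I need is the identity $Y_i'(t)=S(t)\,Y_i(t)$: each $Y_i$ is the variation field of a geodesic variation of $\gamma$ through $p$, so $[Y_i,\gamma']=0$ and hence $Y_i'=\nabla_{\gamma'}Y_i=\nabla_{Y_i}\nabla r=\mathrm{Hess}\,r(Y_i,\cdot)^\sharp=S(t)\,Y_i$, using that $\mathrm{Hess}\,r$ vanishes in the radial direction and restricts to the tangent space of the sphere. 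Written out in the parallel frame $\{E_k\}$, this becomes the matrix identity $A'(t)=S(t)A(t)$, where I abuse notation and write $S(t)$ also for the matrix of the Weingarten map in that frame.

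Combining the two steps finishes the proof: for all $0<t<T$,
\[
\frac{\partial_t\Theta_p}{\Theta_p}
=\mathrm{tr}\bigl(A^{-1}A'\bigr)
=\mathrm{tr}\bigl(A^{-1}SA\bigr)
=\mathrm{tr}(S)
=\sigma_p(\exp_p tu),
\]
by the cyclic invariance of the trace. I expect the only delicate point to be the bookkeeping of sign and normal-direction conventions, so that $\mathrm{tr}(S(t))$ is genuinely the positive quantity $\sigma_p$ appearing in \eqref{radiallaplace} and measured against the inward normal $\xi=-\nabla r$; a quick check against the flat model $Y_i(t)=tE_i$, for which $\Theta_p=t^{\,n-1}$ and $\sigma_p=(n-1)/t$, confirms the orientation. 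The invertibility of $A(t)$ away from conjugate points is the other hypothesis that must be invoked, and it is precisely the assumption made in the statement.
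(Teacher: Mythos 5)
Your proof is correct and takes essentially the same route as the paper: both reduce $\Theta_p=\det A(t)$ to the Jacobi tensor/matrix $A(t)$, apply Jacobi's formula for the logarithmic derivative of the determinant, and identify $\mathrm{tr}\bigl(A'(t)A(t)^{-1}\bigr)$ with the mean curvature $\sigma_p$. The only difference is that the paper cites \cite{EsOs} for the fact that $S(t)=A'(t)\circ A(t)^{-1}$ is the shape operator of the geodesic sphere, whereas you verify this identity directly via $Y_i'=\nabla_{Y_i}\nabla r=S(t)Y_i$, which is a harmless (and self-contained) expansion of the cited step.
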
 

\begin{proof}
Let $\gamma(t)^{\perp}=\{ v\in T_{\gamma(t)}M\, ;\, v \perp \gamma'(t)\}$ and $A(t)$ be an endomorphism of $\gamma^{\perp}(t)$, defined by $A(t) E_i(t) = Y_i(t)$, $i=2,\ldots,n$.
Then, $A(t)$ satisfies 
\begin{equation}\label{jacobitensor}
A''(t) + R(t)\circ A(t) = 0,\quad A(0) = 0,\quad A'(0) = {\rm Id}_{u^\perp}.
\end{equation} 
Using $A(t)$ we define an endomorphism $S(t) := A'(t)\circ A^{-1}(t)$ of $\gamma(t)^{\perp}$.
Then, $S(t)$ is the shape operator of $S(p;t)$ at $\gamma(t)$ (see \cite[p.6--8]{EsOs} for details).
Hence its trace ${\rm Tr} S(t) =: \sigma_p(\exp_p tu)$ is the mean curvature of $S(p;t)$.
Now from \eqref{squarerootdet} one obtains $\Theta_p(\exp_p tu) = \det A(t)$, so that
\begin{equation*}\label{det}
\dfrac{\partial}{\partial t}\det A(t) = \det A(t)\,{\rm Tr} (A'(t)\circ A^{-1}(t))
\end{equation*}
showing the lemma.
\end{proof}

\begin{lemma}\label{derivativesofq}
The volume density $\Theta_p$ of a geodesic sphere $S(p; t)$ satisfies
\begin{equation}\label{ledgerformula0}
\lim_{t\rightarrow 0} \frac{\Theta_p(\exp_p tu)}{t^{n-1}} = 1,\quad
\lim_{t\rightarrow 0} \frac{\partial}{\partial t}\, \left(\frac{\Theta_p(\exp_p tu)}{t^{n-1}}\right) = 0,
\end{equation}
and
\begin{equation}\label{ledgerformulaelse}
\left.\frac{\partial^2}{\partial t^2}\left( \frac{\Theta_p(\exp_y tu)}{t^{n-1}}\right)\right\vert_{t=0} = - \frac{1}{3} {\rm{Ric}}(u, u)
\end{equation}
for any $u\in U_p M$.
\end{lemma}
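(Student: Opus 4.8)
The plan is to reduce everything to the Taylor expansion of the Jacobi tensor $A(t)$ near $t=0$, exploiting the identity $\Theta_p(\exp_p tu) = \det A(t)$ established in the proof of Lemma \ref{dens_meancurv}. First I would differentiate the Jacobi equation \eqref{jacobitensor} repeatedly at $t=0$ to read off the low-order coefficients. From $A(0)=0$, $A'(0) = {\rm Id}_{u^\perp}$ and $A''(t) = -R(t)\circ A(t)$ one gets $A''(0) = 0$; differentiating $A''(t) = -R(t)\circ A(t)$ once more and evaluating at $t=0$ gives $A'''(0) = -R'(0)\circ A(0) - R(0)\circ A'(0) = -R(0)$, where $R(0)$ denotes the Jacobi operator $X \mapsto R(X,u)u$ on $u^\perp$. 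Hence the Taylor expansion reads
\begin{equation*}
A(t) = t\,{\rm Id}_{u^\perp} - \frac{t^3}{6}\,R(0) + O(t^4),
\qquad
\frac{A(t)}{t} = {\rm Id}_{u^\perp} - \frac{t^2}{6}\,R(0) + O(t^3).
\end{equation*}

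Next, since $A(t)$ is an endomorphism of the $(n-1)$-dimensional space $u^\perp$, I would factor $\det A(t) = t^{n-1}\det\!\big(A(t)/t\big)$ and expand the determinant using $\det({\rm Id} + B) = 1 + {\rm Tr}\,B + O(|B|^2)$. With $B = -\frac{t^2}{6}R(0) + O(t^3)$ one has $B^2 = O(t^4)$, so
\begin{equation*}
\frac{\Theta_p(\exp_p tu)}{t^{n-1}} = \det\!\left(\frac{A(t)}{t}\right)
= 1 - \frac{t^2}{6}\,{\rm Tr}\,R(0) + O(t^3).
\end{equation*}
Finally I would identify ${\rm Tr}\,R(0) = {\rm Ric}(u,u)$: the Ricci curvature is the trace of $X \mapsto R(X,u)u$ over all of $T_pM$, but the summand along $u$ contributes nothing because $R(u,u)u = 0$, so the trace over $u^\perp$ already equals ${\rm Ric}(u,u)$. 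Reading the three claims off this expansion then completes the proof: the constant term gives the first limit in \eqref{ledgerformula0}; the absence of a term linear in $t$ gives the second limit in \eqref{ledgerformula0}; and the coefficient of $t^2$ yields \eqref{ledgerformulaelse}, since $\frac{d^2}{dt^2}\big(-\frac{t^2}{6}c\big)\big|_{t=0} = -\frac{c}{3}$ with $c = {\rm Ric}(u,u)$.

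The main obstacle, though essentially bookkeeping, is making the determinant expansion rigorous — one must ensure that the matrix error terms in $A(t)/t$ really contribute only at order $t^3$ to the scalar $\det(A(t)/t)$, and that no term linear in $t$ appears (it does not, precisely because the $t^1$-coefficient of $A(t)/t$ vanishes). A clean way to control this is to observe that the entries of $A(t)/t$ are smooth functions of $t$ (indeed real-analytic along $\gamma$ away from conjugate points), so the $2$-jet at $t=0$ is governed entirely by the derivatives $A'(0),A''(0),A'''(0)$ computed above; the quadratic remainder in the determinant formula, being $O(|B|^2) = O(t^4)$, is harmless. Alternatively one simply invokes the classical Ledger expansion directly.
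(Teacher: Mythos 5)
Your proposal is correct and follows essentially the same route as the paper: expand the Jacobi tensor $A(t)$ via the Jacobi equation \eqref{jacobitensor} to get $A(t)=t\left(\mathrm{Id}_{u^\perp}-\frac{1}{6}R_u t^2+O(t^3)\right)$, take the determinant, identify $\mathrm{Tr}\,R_u=\mathrm{Ric}(u,u)$, and read off the three limits. The paper states this expansion and its determinant without the intermediate derivative computations, so your version merely fills in details (the computation of $A''(0)$, $A'''(0)$, the $\det(\mathrm{Id}+B)$ expansion, and the trace identification) that the paper leaves implicit.
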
 

For the formula \eqref{ledgerformulaelse} refer to \cite[p.1]{Sz}.

\begin{proof}
From  \eqref{jacobitensor}, the MacLaurin expansion of $A(t)$ yields
\begin{equation}\label{expansionofa}
A(t) = t\left({\rm Id}_{u^{\perp}} - \frac{1}{3!} R_u t^2 + O(t^3)\right),
\end{equation}
and thus $\Theta_p(\exp_ tu)$ can be expanded as
\begin{equation}\label{expand}
\Theta_p(\exp_p tu)=\det A(t) = t^{n-1}\left(1- \frac{1}{3!} \mathrm{Ric}(u,u) t^2 + O(t^2) \right),
\end{equation}
from which one gets the formulas of the lemma. 
\end{proof}
 

\begin{lemma}
The mean curvature $\sigma_p$ of a geodesic sphere $S(p; t)$ satisfies
\begin{equation}\label{meancurvature}
\lim_{t\rightarrow 0}\left(\sigma_p(\exp_p tu) - \frac{n-1}{t}\right) = 0.
\end{equation}
\end{lemma}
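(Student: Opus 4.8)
The plan is to combine the two preceding lemmas: Lemma~\ref{dens_meancurv} expresses the mean curvature as the logarithmic derivative of the volume density, while Lemma~\ref{derivativesofq} controls the behaviour of the normalized density $\Theta_p/t^{n-1}$ near $t=0$. First I would note that for sufficiently small $t>0$ the point $\gamma(t)=\exp_p(tu)$ is not conjugate to $p$ along $\gamma$, so that $\Theta_p(\exp_p tu)>0$ on some interval $(0,\varepsilon)$ and Lemma~\ref{dens_meancurv} applies; thus
\begin{equation*}
\sigma_p(\exp_p tu)=\frac{\partial_t\Theta_p(\exp_p tu)}{\Theta_p(\exp_p tu)}=\frac{\partial}{\partial t}\log\Theta_p(\exp_p tu).
\end{equation*}

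Next I would introduce the normalized density $q(t):=\Theta_p(\exp_p tu)/t^{n-1}$, which is positive and smooth on $(0,\varepsilon)$. Writing $\Theta_p(\exp_p tu)=t^{n-1}q(t)$ and taking the logarithmic derivative gives
\begin{equation*}
\sigma_p(\exp_p tu)=\frac{\partial}{\partial t}\bigl((n-1)\log t+\log q(t)\bigr)=\frac{n-1}{t}+\frac{q'(t)}{q(t)},
\end{equation*}
so that the quantity to be estimated is exactly the logarithmic derivative of $q$:
\begin{equation*}
\sigma_p(\exp_p tu)-\frac{n-1}{t}=\frac{q'(t)}{q(t)}.
\end{equation*}

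Finally I would invoke the two limits in \eqref{ledgerformula0} of Lemma~\ref{derivativesofq}, namely $\lim_{t\to0}q(t)=1$ and $\lim_{t\to0}q'(t)=0$. Since the denominator tends to the nonzero limit $1$, the quotient $q'(t)/q(t)$ tends to $0/1=0$, which yields \eqref{meancurvature}. I do not expect any genuine obstacle here: the statement is a direct corollary of the two lemmas already established, and the only point requiring a word of care is the non-vanishing of $\Theta_p$ for small $t>0$, guaranteed by the absence of conjugate points near $p$, which legitimizes both the formula of Lemma~\ref{dens_meancurv} and the division by $q(t)$.
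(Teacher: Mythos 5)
Your proof is correct and is essentially identical to the paper's own argument: the paper likewise sets $F(t)=\Theta_p(\exp_p tu)/t^{n-1}$, writes $\sigma_p(\exp_p tu)-\frac{n-1}{t}=F'(t)/F(t)$ via Lemma~\ref{dens_meancurv}, and concludes from the limits in \eqref{ledgerformula0}. Your additional remark on non-conjugacy near $p$, which justifies the division, is a minor point of care that the paper leaves implicit.
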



\begin{proof}
By setting $\tau_p(\exp_p tu) = \sigma_p(\exp_p tu) - \dfrac{n-1}{t}$ and $F(t)=\dfrac{\Theta_p(\exp_p tu)}{t^{n-1}}$,
one has
\begin{equation*}
\tau_p(\exp_p tu) =\dfrac{\partial}{\partial t}\log F(t)=\dfrac{F'(t)}{F(t)}.
\end{equation*}
From \eqref{ledgerformula0}, we obtain the lemma.
\end{proof}

\begin{definition}
The volume entropy of $(M, g)$ is defined by 
\begin{equation}\label{def_volent}
Q= \lim_{r\rightarrow\infty} \frac{1}{r} \log {\rm Vol}\, B(p;r),
\end{equation}
where ${\rm Vol}\, B(p;r)$ is the volume of the closed ball $B(p;r)=\{q\in M\,|\,d(p, q)\le r\}$;
\begin{equation}\label{volBall}
\mathrm{Vol} B(p;r) = \int_0^r dt \int_{u\in U_p M} \Theta_p(\exp_p tu)\,du.
\end{equation} 
\end{definition}

\begin{lemma}\label{Q_rescaling}
If we denote the volume entropy of $(M, g)$ as $Q_g$, then under a rescaling of the metric, we have $Q_{c^2 g}=Q_g/c$.
\end{lemma}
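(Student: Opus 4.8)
The plan is to exploit the elementary scaling behaviour of distances and volumes under a constant rescaling of the metric. First I would record the two basic facts: replacing $g$ by $c^2 g$ multiplies all distances by $c$, so that $d_{c^2 g}(p, q) = c\, d_g(p, q)$, and it multiplies the Riemannian volume form by $c^n$ in dimension $n$. Consequently the metric ball of $c^2 g$-radius $r$ centered at $p$ coincides, as a subset of $M$, with the $g$-ball of radius $r/c$, and its $c^2 g$-volume equals $c^n$ times its $g$-volume; in symbols,
\begin{equation*}
\mathrm{Vol}_{c^2 g}\, B_{c^2 g}(p; r) = c^n\, \mathrm{Vol}_{g}\, B_{g}(p; r/c).
\end{equation*}

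Next I would substitute this identity into the definition \eqref{def_volent} of the volume entropy. Taking logarithms splits the expression into the term $\frac{1}{r}\, n \log c$, which tends to $0$ as $r \to \infty$, and the term $\frac{1}{r}\log \mathrm{Vol}_g\, B_g(p; r/c)$. A change of variable $s = r/c$ turns the latter into $\frac{1}{c}\cdot\frac{1}{s}\log \mathrm{Vol}_g\, B_g(p; s)$, whose limit as $s \to \infty$ is exactly $\frac{1}{c}\, Q_g$ by the definition of $Q_g$. Combining the two contributions gives $Q_{c^2 g} = Q_g / c$.

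There is no genuine obstacle here; the only point requiring a little care is the bookkeeping of the two scaling exponents, namely that distances scale linearly while volumes scale with the power $n$. Since the entropy normalizes the logarithm of volume by the radius, the $c^n$ factor contributes only the vanishing term $\frac{n\log c}{r}$, and the surviving effect is the linear scaling of the radius, which is responsible for the factor $1/c$. This is consistent with the earlier remark that $\mathrm{Ric}$ is invariant under rescaling while $Q$ is not, and it is precisely why the normalization $\mathrm{Ric} = -(n-1)$ must be imposed before the entropy can be meaningfully bounded in Theorem \ref{maintheorem}.
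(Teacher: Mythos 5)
Your proof is correct and follows essentially the same route as the paper: identify $B_{c^2 g}(p;r)$ with $B_g(p;r/c)$, use $dv_{c^2 g} = c^n\, dv_g$, and pass to the limit in the definition of $Q$. In fact you are slightly more careful than the paper, since you explicitly note that the $c^n$ factor contributes only the vanishing term $\frac{n\log c}{r}$, a step the paper's chain of equalities leaves implicit.
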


\begin{proof}
Under a rescaling of the metric, the volume form satisfies $dv_{c^2g}=c^n dv_{g}$,
and a ball $B_{c^2 g}(p; r)$ with radius $r$ with respect to $c^2 g$ becomes a ball $B_{g}(p; \frac{r}{c})$ with radius $\frac{r}{c}$ with respect to $g$.
Therefore, we have
\begin{align*}
Q_{c^2 g}
=&\lim_{r\rightarrow \infty}\frac{\log \int_{B_{c^2 g}(p; r)}\,dv_{c^2 g}}{r}\\
=&\lim_{r\rightarrow \infty}\frac{\log \int_{B_{g}(p; \frac{r}{c})}c^n\,dv_{g}}{r}\\
=&\lim_{r\rightarrow \infty}\frac{\log \int_{B_{g}(p; \frac{r}{c})}\,dv_{g}}{\frac{r}{c}}\cdot\dfrac{1}{c}
=\dfrac{1}{c}\,Q_g.
\end{align*}
\end{proof}

\subsection{Busemann function and horospheres}\label{S_busemann}

In this subsection,
we introduce the properties of the Busemann function and horospheres, which are necessary for our argument.
For further details, please refer to \cite{Inn, Esch, EsOs}.

Let $(M^n, g)$ be a complete and simply connected Riemannian manifold.
We denote the geodesic parametrized by its arc-length with initial velocity $v\in UM$ as $\gamma_v(t)$.
Here $UM$ is the unit tangent bundle of $M$.

\begin{definition}
For $v\in UM$, we define the \textit{Busemann function} $b_v : M\rightarrow \mathbb{R}$ on $M$ by
\begin{equation}
b_v(x)=\lim_{t\rightarrow \infty}\left\{d(\gamma_v(t), x)-t\right\}.
\end{equation}
We call the level hypersurfaces of the Busemann function the \textit{horosphere}.
We set $H_v:=b_v^{-1}(0)$.
\end{definition}
 
The Busemann function $b_v$ is uniformly continuous, and particularly Lipschitz continuous.
From Lipschitz continuity, $b_v$ is almost everywhere differentiable.

\begin{definition}
Let $\gamma_1$ and $\gamma_2$ be two geodesics in $M$ parametrized by its arc-length $t$.
We call that $\gamma_2$ is \textit{asymptotically equivalent}  to $\gamma_1$, if $d(\gamma_1(t), \gamma_2(t))$ is bounded above in $t>0$.
If $\gamma_1=\gamma_v$ and $\gamma_2=\gamma_w$ for $v, w\in UM$, we say also that $w$ is asymptotically equivalent to $v$.
\end{definition}

\begin{proposition}
We assume that $(M, g)$ has no conjugate points.
Then, the following holds:
\begin{enumerate}
\item The asymptotic relation on the set of all geodesics in $M$ (or on the unit tangent bundle $UM$) is an equivalence relation (\cite[Cor.1.10]{Inn}).
\item Let $v, w \in UM$. If $w$ is asymptotically equivalent to $v$, then $b_v-b_w$ is constant on $M$ (\cite[Prop.1.11.]{Inn}). 
\item For arbitrary $v\in UM$ and $p\in M$, there exists $w\in T_pM$ such that $w$ is asymptotically equivalent to $v$ (\cite[Prop.1.12.]{Inn}).
\item The Busemann function $b_v$ is of $C^1$-class and its gradient vector field $\nabla b_v$ is given by $\nabla b_v(q)=-\gamma_w'(0)$, where $w\in T_qM$ is asymptotically equivalent to $v$ (\cite[p.241, Proposition 1]{Esch}).
In particular, $\nabla b_v$ is a unit normal vector field on a horosphere.
\end{enumerate}
\end{proposition}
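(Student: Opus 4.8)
The plan is to treat the four assertions in the order of their logical dependence rather than in the order stated, since (1) is essentially formal while (3) and (4) carry the geometric content and (2) is then a consequence of (4). Throughout, the standing hypothesis that $(M,g)$ has no conjugate points is what makes the limiting arguments work, so I would flag at each step exactly where it enters.

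Claim (1) I would dispatch immediately from the metric axioms: reflexivity is clear since $d(\gamma_v(t),\gamma_v(t))=0$, symmetry follows from $d(\gamma_1(t),\gamma_2(t))=d(\gamma_2(t),\gamma_1(t))$, and transitivity from the triangle inequality $d(\gamma_1(t),\gamma_3(t))\le d(\gamma_1(t),\gamma_2(t))+d(\gamma_2(t),\gamma_3(t))$, so that a sum of two bounded functions remains bounded. No curvature hypothesis is needed at this stage.

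For the existence statement (3), fix $v\in U_qM$ and a point $p\in M$, and for each $n\in\mathbb{N}$ join $p$ to $\gamma_v(n)$ by a minimizing geodesic with initial unit vector $w_n\in U_pM$. Since $U_pM$ is compact, I pass to a convergent subsequence $w_n\to w$, and I would then show that $\gamma_w$ is asymptotic to $\gamma_v$ by bounding $d(\gamma_w(t),\gamma_v(t))$ uniformly in $t$. The no-conjugate-points hypothesis is decisive here: it guarantees that geodesics depend continuously on their endpoints and that the limit $\gamma_w$ is a genuine ray, so the connecting geodesics converge to $\gamma_w$ on compact $t$-intervals and the asymptotic distance estimate survives the passage to the limit.

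Statement (4) I would obtain by realizing $b_v$ as a locally uniform limit of the smooth functions $b_v^{\,t}(x):=d(\gamma_v(t),x)-t$. Because there are no conjugate points, $\exp_{\gamma_v(t)}$ is nonsingular and $x\mapsto d(\gamma_v(t),x)$ is smooth away from $\gamma_v(t)$, with gradient equal to the outward unit vector of the geodesic from $x$ toward $\gamma_v(t)$; as $t\to\infty$ these initial directions converge, by the argument of (3), to the asymptotic direction, which yields the pointwise formula $\nabla b_v(q)=-\gamma_w'(0)$. Upgrading this to $C^1$ regularity requires that the gradients $\nabla b_v^{\,t}$ converge \emph{locally uniformly}, which is the genuinely delicate point and where I expect the main obstacle to lie. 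Granting (4), claim (2) is then quick: if $w$ is asymptotic to $v$, then by uniqueness of the asymptote (from (1) and (3)) the gradient formula gives $\nabla b_v=-\gamma_w'$ along $\gamma_w$, so $\tfrac{d}{dt}b_v(\gamma_w(t))=-1$ and likewise $\tfrac{d}{dt}b_w(\gamma_w(t))=-1$; hence $b_v-b_w$ is constant along each such geodesic, and since these geodesics foliate $M$ it is globally constant.
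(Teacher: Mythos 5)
The paper gives no argument at all for this proposition: all four items are imported from the literature (Innami for (1)--(3), Eschenburg for (4)), so a self-contained proof must supply the actual content of those references, and yours does not. The decisive gap is in (3), on which your (4) and (2) then depend. Your construction---minimizing geodesics $\sigma_n$ from $p$ to $\gamma_v(t_n)$ and a convergent subsequence $w_n\to w$ of initial vectors---is the standard one, but convergence of $\sigma_n$ to $\gamma_w$ on compact parameter intervals (which is all that continuous dependence of geodesics on their data provides) gives, for each \emph{fixed} $t$, only $d(\gamma_w(t),\gamma_v(t))=\lim_n d(\sigma_n(t),\gamma_v(t))$; it produces no bound uniform in $t$. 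For the phrase ``the asymptotic distance estimate survives the passage to the limit'' to mean anything, you first need an estimate of the form $d(\sigma_n(s),\gamma_v(s))\le C$ uniformly in $n$ and $s\in[0,t_n]$, and that is exactly what is \emph{not} automatic under ``no conjugate points'' alone. Such an estimate is what convexity of $s\mapsto d(\sigma_n(s),\gamma_v(s))$ delivers in nonpositive curvature or in manifolds without focal points---this convexity theory is precisely the subject of the cited paper of Innami---and without some substitute for it the limit ray can a priori drift arbitrarily far from $\gamma_v$. The same unestablished convergence of directions is then reused in your sketch of (4), where you yourself concede that the locally uniform convergence of the gradients $\nabla b_v^{\,t}$ is ``the genuinely delicate point''; that is, item (4), which is Eschenburg's theorem, is acknowledged rather than proved.

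Two further, fixable errors. In (2), ``uniqueness of the asymptote'' does not follow from (1) and (3): an equivalence relation plus an existence statement says nothing about uniqueness. What is true is that uniqueness follows from (4) itself, since (4) equates \emph{every} asymptotic direction $w\in T_qM$ with the single well-defined vector $-\nabla b_v(q)$. More seriously, your closing step---$b_v-b_w$ is constant along each geodesic asymptotic to $v$, and these ``foliate $M$,'' hence it is globally constant---is invalid: a function can be constant on every leaf of a foliation with the constant varying from leaf to leaf (consider $f(x,y)=y$ and the horizontal lines in $\mathbb{R}^2$). The correct finish is pointwise: at each $q\in M$ take, by (3), a vector $u\in T_qM$ asymptotic to $v$; since $v\sim w$, transitivity makes $u$ asymptotic to $w$ as well, so (4) gives $\nabla b_v(q)=-\gamma_u'(0)=\nabla b_w(q)$. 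Thus $\nabla(b_v-b_w)\equiv 0$ and $b_v-b_w$ is constant because $M$ is connected. Item (1), by contrast, is fine as you wrote it: with the paper's bounded-distance definition of asymptotic equivalence, the equivalence-relation property really is just the triangle inequality and needs no curvature hypothesis (the cited result of Innami is nontrivial only for the limit-of-chords notion of asymptote used there).
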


We consider the shape operator of a horosphere $H_v$ with respect to the unit normal vector field $\xi=-\nabla b_v$.
Then the second fundamental form is given by the Hessian of the Busemann function and the mean curvature is the minus sign of the Laplacian $\sigma_H=-\Delta b_v$.

\begin{remark}
We consider the mean curvature of the horospheres,
so we require the Busemann function to be of $C^2$-class.
For instance, if $(M, g)$ has no focal points,
this requirement is guaranteed (\cite[p.246, Theorem 2 (i)]{Esch}).
The definition of a manifold having no focal points can be found in \cite[p.312]{Inn}.
If a manifold has non-positive sectional curvature, then it has no focal points.
Moreover, if a manifold has no focal points, then it has no conjugate points \cite[p.311]{Inn}.
The Busemann function on non-compact harmonic manifolds,
which we will define in the following section,
is also $C^2$-class, and even real analytic \cite{RS}.
\end{remark}

Finally, we mention that horospheres are obtained as limits of geodesic spheres.
For an arbitrary $v\in UM$, consider the geodesic sphere $S(\gamma_v(t), t)$.
We choose a point $p_t\in S(\gamma_v(t), t)$ for each $t >0$, which is characterized as $d(\gamma_v(t), p_t)-t=0$.
We note that $\gamma_v(0)$ is always contained within $S(\gamma_v(t), t)$ for any $t>0$.
Roughly speaking, we can understand that a sequence of such a point $p_t$ converges to a point $p$ satisfying $\lim_{t\rightarrow \infty}(d(\gamma_v(t),p)-t)=0$,
which implies $b_v(p)=0$, i.e., $p\in H_v$.
See also \cite[p.8]{EsOs}.


\subsection{Harmonic manifolds}

\begin{definition}
Let $(M^n, g)$ be a complete Riemannian manifold.
When the volume density function $\omega_p:=\sqrt{\det(g_{ij})}$ in the normal coordinate neighborhood $\{x_1, \ldots, x_n\}$ around each point $p\in M$ depends only on the distance $r=d(p,\,\cdot\,)$, we call $(M, g)$ a \textit{harmonic manifold}.
\end{definition}

There are several assertions equivalent to the definition of harmonic manifolds.
Here, we list only those relevant to our discussion in the following theorem.
For further details, please refer to the references.

\begin{theorem}[cf. \cite{Sz}, {\cite[6.21 Proposition]{Besse}},  {\cite[Chap.II]{RWW}}]
\label{equiv_harmonicmfd}
Let $(M, g)$ be a complete Riemannian manifold.
The following are equivalent to each other.
\begin{enumerate}
\item $(M, g)$ is harmonic.
\item The volume density $\Theta_p(\exp_p ru)$ of $S(p;r)$ is a radial function, i.e., $\Theta_p(\exp_p ru)$ does not depend on $u\in U_pM$.
\item The mean curvature $\sigma_p(\exp_p ru)$ of $S(p;r)$ is a radial function, i.e., $\sigma_p(\exp_p ru)$ does not depend on $u\in U_pM$.
\item the averaging operator $\mathcal{M}_p$ commutes with the Laplace-Beltrami operator $\Delta$;\, $\Delta\circ\mathcal{M}_p = {\mathcal{M}}_p\circ\Delta$.
Here, for a smooth function $f$, $\mathcal{M}_p(f)$ is a smooth radial function on $M$ whose value is the average of $f$ on $S(p;r)$;
\begin{equation}\label{avrgop}
\mathcal{M}_p(f)(x) := \frac{1}{\int_{S(p;r)}dv_{S(p;r)}}\, \int_{x\in S(p;r)} f(x)\,d v_{S(p;r)}.
\end{equation}
\end{enumerate}
\end{theorem}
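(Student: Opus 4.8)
The plan is to establish the four-way equivalence through the chain (1) $\Leftrightarrow$ (2), (2) $\Leftrightarrow$ (3), and (3) $\Leftrightarrow$ (4). For (1) $\Leftrightarrow$ (2), I would first record the elementary relation between the two density functions: passing from normal coordinates $\{x^i\}$ to geodesic polar coordinates $x=tu$ with $t>0$ and $u\in U_pM$, the Euclidean polar Jacobian gives $dx^1\cdots dx^n = t^{n-1}\,dt\,du$, so comparing $dv_M=\omega_p\,dx^1\cdots dx^n$ with the expression $dv_M=\Theta_p(\exp_p tu)\,dt\,du$ read off from \eqref{volBall} yields $\Theta_p(\exp_p tu)=t^{n-1}\,\omega_p(\exp_p tu)$. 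Since the factor $t^{n-1}$ depends only on $t$, the density $\omega_p$ is radial precisely when $\Theta_p$ is, which is (1) $\Leftrightarrow$ (2). For (2) $\Leftrightarrow$ (3) I would invoke Lemma \ref{dens_meancurv}, $\sigma_p(\exp_p tu)=\partial_t\log\Theta_p(\exp_p tu)$: the implication (2) $\Rightarrow$ (3) is immediate, and for (3) $\Rightarrow$ (2) I would integrate $\partial_t\log\bigl(\Theta_p/t^{n-1}\bigr)=\sigma_p-(n-1)/t$ from $0$, where the normalization $\Theta_p/t^{n-1}\to 1$ from \eqref{ledgerformula0} fixes the constant of integration; if $\sigma_p$ is independent of $u$ then so is the resulting integral, whence $\Theta_p$ is radial.

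The heart of the argument is (3) $\Rightarrow$ (4), which I expect to be the main obstacle. The key observation is that when $\Theta_p=\Theta(t)$ is radial it cancels in the averaging operator \eqref{avrgop}, so that $\mathcal{M}_p(f)(t)=\omega_{n-1}^{-1}\int_{U_pM} f(\exp_p tu)\,du$ reduces to a plain average over directions with a \emph{fixed} domain of integration. Differentiating under this integral then commutes the radial derivatives through $\mathcal{M}_p$, giving $\partial_t\mathcal{M}_p(f)=\mathcal{M}_p(\partial_t f)$ and $\partial_t^2\mathcal{M}_p(f)=\mathcal{M}_p(\partial_t^2 f)$. Using the radial decomposition \eqref{radiallaplace}, the fact that ${\tilde{\Delta}}$ annihilates the radial function $\mathcal{M}_p(f)$, and the radiality of $\sigma_p$, I would compute $\Delta\mathcal{M}_p(f)=\mathcal{M}_p\bigl(-\partial_t^2 f-\sigma_p\,\partial_t f\bigr)$. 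On the other side, $\mathcal{M}_p(\Delta f)$ differs only by the extra term $\mathcal{M}_p({\tilde{\Delta}} f)$, which vanishes because $\int_{S(p;t)}{\tilde{\Delta}} f\,dv_{S(p;t)}=0$ on the closed geodesic sphere; the two expressions therefore coincide. The delicate points here are the justification of differentiation under the integral sign and the simultaneous use of the cancellation of $\Theta_p$ and the vanishing of the spherical-Laplacian average.

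For the converse (4) $\Rightarrow$ (3) I would feed radial test functions into the commutation relation, where the argument becomes comparatively soft. For $f(x)=\rho(t(x))$ one has $\mathcal{M}_p(f)=\rho$ and $\Delta f=-\rho''-\sigma_p\,\rho'$, so that $\Delta\mathcal{M}_p(f)(\exp_p tu)=-\rho''(t)-\sigma_p(\exp_p tu)\,\rho'(t)$, whereas $\mathcal{M}_p(\Delta f)(t)=-\rho''(t)-\overline{\sigma}(t)\,\rho'(t)$ with $\overline{\sigma}(t)=\mathcal{M}_p(\sigma_p)(t)$ the spherical average. Commutation then forces $\sigma_p(\exp_p tu)\,\rho'(t)=\overline{\sigma}(t)\,\rho'(t)$ for all $u$; choosing a smooth radial $\rho$ with $\rho'(t_0)\neq 0$ shows that $\sigma_p(\exp_p t_0 u)$ is independent of $u$, and letting $t_0$ range over $(0,\infty)$ gives (3) by continuity. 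Combining the four implications yields the equivalence of (1)–(4), with the normalization supplied by Lemma \ref{derivativesofq} ensuring that no spurious directional dependence survives the integration in the step (3) $\Rightarrow$ (2).
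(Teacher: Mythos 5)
The paper does not actually prove Theorem \ref{equiv_harmonicmfd}; it is quoted from the literature (\cite{Sz}, \cite[6.21 Proposition]{Besse}, \cite[Chap.~II]{RWW}), so there is no internal proof to compare against. Your reconstruction is correct and follows what is essentially the standard argument of those references. The chain is sound: the identity $\Theta_p(\exp_p tu)=t^{n-1}\omega_p(\exp_p tu)$ gives (1)~$\Leftrightarrow$~(2) since the factor $t^{n-1}$ is direction-independent; Lemma \ref{dens_meancurv} plus integration of $\sigma_p-(n-1)/t$ from $0$, with the constant fixed by \eqref{ledgerformula0}, gives (2)~$\Leftrightarrow$~(3) (the integrand is continuous at $t=0$ by \eqref{meancurvature}, so the integral is well defined); for (3)~$\Rightarrow$~(4) the cancellation of the radial $\Theta$ in \eqref{avrgop}, differentiation under the integral over the fixed compact domain $U_pM$, the decomposition \eqref{radiallaplace}, and $\int_{S(p;t)}\tilde{\Delta}f\,dv_{S(p;t)}=0$ on the closed sphere together yield the commutation; and testing (4) on radial bump functions $\rho$ supported in an annulus, with $\rho'(t_0)\neq 0$, recovers (3). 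The only point you should state explicitly rather than leave implicit is the domain of validity: the relation $\Theta_p=t^{n-1}\omega_p$, Lemma \ref{dens_meancurv}, and the divergence-theorem step all require working inside the injectivity radius (or, as in Lemma \ref{dens_meancurv}, before the first conjugate point), so that $S(p;t)$ is a smooth closed hypersurface and geodesic polar coordinates are legitimate; this localization is consistent with the local character of the definition of harmonicity and with how the cited references treat the statement, but a blind reader of your proof could object that geodesic spheres in a general complete manifold need not be smooth.
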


\begin{remark}
Due to the invariance of the volume density under the canonical geodesic involution,
$\Theta_p(q)$ is a function $\Theta(r(q))$ of the distance $r$ independent of $p\in M$ (see \cite[6.12 Lemma]{Besse}).
Similarly, from Lemma \ref{dens_meancurv}, $\sigma_p(q)$ is also a radial function $\sigma(r(q))$ independent of $p$.
\end{remark}

From \eqref{ledgerformulaelse}, each harmonic manifold is Einstein,
i.e., $\mathrm{Ric}=\dfrac{s}{n} g$, where $s$ is the scalar curvature of $(M, g)$.

\begin{lemma}\label{meancurv_Q}
Let $(M, g)$ be a non-compact harmonic manifold and $Q$ be the volume entropy of $(M, g)$.
Then, all horospheres in $M$ have constant mean curvature, given by
\begin{equation}\label{HessB_Q}
\sigma_{H_v}=-\Delta b_v=Q.
\end{equation}
\end{lemma}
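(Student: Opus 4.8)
The plan is to realize the horosphere $H_v$ as a limit of geodesic spheres, transport the (radial, hence constant) mean curvature of those spheres to the limit, and then identify the limiting value with the volume entropy $Q$ by means of the density function. Recall, as explained at the end of Section \ref{S_busemann}, that $H_v$ is the limit of the geodesic spheres $S(\gamma_v(t), t)$ as $t\to\infty$. Since $(M,g)$ is harmonic, Theorem \ref{equiv_harmonicmfd}(3) guarantees that the mean curvature of each such sphere is a radial function $\sigma(s)$ of its radius $s$ alone, in particular constant over the whole sphere; moreover $M$ has no conjugate points, so the radial expression \eqref{radiallaplace} of $\Delta$ is valid along the relevant geodesics.

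Writing the Busemann function as the limit $b_v(x)=\lim_{t\to\infty}\big(d(\gamma_v(t),x)-t\big)$ of shifted distance functions, I would compute $\Delta b_v$ as the limit of $\Delta\big(d(\gamma_v(t),\cdot)-t\big)$, which is legitimate because on a harmonic manifold $b_v$ is of class $C^2$ (indeed real analytic, as recalled in the remark citing \cite{RS}) and the shifted distance functions converge to it in $C^2$. For fixed $t$, the function $d(\gamma_v(t),\cdot)$ is the radial coordinate centered at $\gamma_v(t)$, so \eqref{radiallaplace} gives $\Delta\, d(\gamma_v(t),\cdot)(x) = -\sigma\big(d(\gamma_v(t),x)\big)$. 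As $t\to\infty$ the argument $d(\gamma_v(t),x)$ tends to $\infty$ for every fixed $x$, whence $\Delta b_v(x) = -\lim_{s\to\infty}\sigma(s)$. This value is independent of both the point $x\in H_v$ and the vector $v\in UM$, so every horosphere has constant mean curvature $\sigma_{H_v} = -\Delta b_v = \lim_{s\to\infty}\sigma(s)$.

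It then remains to identify $\lim_{s\to\infty}\sigma(s)$ with $Q$. By Lemma \ref{dens_meancurv} we have $\sigma(s)=\Theta'(s)/\Theta(s)=(\log\Theta)'(s)$, while \eqref{volBall} gives $\mathrm{Vol}\,B(p;r)=\omega_{n-1}\int_0^r\Theta(t)\,dt$. Applying L'Hôpital's rule to the defining limit \eqref{def_volent} yields
\begin{equation*}
Q=\lim_{r\to\infty}\frac{\log\big(\omega_{n-1}\int_0^r\Theta(t)\,dt\big)}{r}=\lim_{r\to\infty}\frac{\Theta(r)}{\int_0^r\Theta(t)\,dt}.
\end{equation*}
Once the limit $L:=\lim_{s\to\infty}\sigma(s)$ is known to exist, the relation $\sigma=(\log\Theta)'$ forces $\Theta(r)$ to grow like $e^{Lr}$, so the last ratio converges to $L$; hence $Q=L=\sigma_{H_v}$, which is exactly \eqref{HessB_Q}.

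The main obstacle is the existence of $\lim_{s\to\infty}\sigma(s)$, on which both halves of the argument rest. I would obtain it from the Riccati equation $S'(t)+S(t)^2+R(t)=0$ satisfied by the shape operator $S(t)$ of the geodesic spheres (cf. \eqref{jacobitensor}): taking traces and using that a harmonic manifold is Einstein, so that $\mathrm{Ric}(\gamma',\gamma')$ is a constant, together with the Cauchy--Schwarz bound $\mathrm{Tr}\,S^2\ge \sigma^2/(n-1)$, yields a differential inequality for $\sigma$ which, combined with the absence of conjugate points, forces $\sigma$ to be monotone and bounded and therefore convergent. The secondary technical point, the interchange of $\Delta$ with the limit defining $b_v$, is handled by the $C^2$-convergence of the shifted distance functions on harmonic manifolds.
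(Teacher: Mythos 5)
Your proof follows the same route as the paper's: realize $H_v$ as the limit of the geodesic spheres $S(\gamma_v(t),t)$, use harmonicity to make the sphere mean curvature a radial function $\sigma(s)=\Theta'(s)/\Theta(s)$ (Lemma \ref{dens_meancurv}), and identify $Q$ with $\lim_{s\to\infty}\sigma(s)$ by l'H\^{o}pital's rule applied to \eqref{def_volent}. The paper's proof is exactly this, stated in three lines, and you are right that it tacitly uses both the $C^2$-convergence of the shifted distance functions (covered by the remark quoting \cite{RS}) and, crucially, the existence of $\lim_{s\to\infty}\sigma(s)$: l'H\^{o}pital's rule only transfers a limit that is already known to exist for the ratio of derivatives, so this existence is load-bearing in the paper's argument as well, even though the paper does not comment on it.

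The problem is that the argument you propose for this existence --- the step you yourself call the main obstacle --- does not work as stated. Tracing the Riccati equation $S'+S^2+R=0$ and using $\mathrm{Tr}\,S^2\ge\sigma^2/(n-1)$ gives only the one-sided inequality
\begin{equation*}
\sigma'(t)\le-\frac{\sigma(t)^2}{n-1}-\mathrm{Ric}(\gamma',\gamma').
\end{equation*}
With $\mathrm{Ric}(\gamma',\gamma')=-(n-1)c^2<0$ (the relevant case), this forces $\sigma'<0$ only while $\sigma>(n-1)c$; it permits $\sigma$ to increase whenever $\sigma<(n-1)c$, and it gives no lower bound at all, so it yields neither monotonicity nor boundedness from below, hence no convergence. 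The standard repair uses the operator Riccati comparison together with harmonicity, not just the trace: for fixed $t$, the shape operators at $\gamma(t)$ of the spheres centered at $\gamma(t-s)$ are solutions of the same Riccati equation with poles at different parameters; distinct solutions cannot cross and the one with the nearer pole dominates, so these operators are non-increasing in $s$, and they are bounded below because the stable solution along a geodesic without conjugate points is finite (Green--Eschenburg theory; cf. \cite{EsOs}). Since $M$ is harmonic, the trace of the shape operator of a sphere of radius $s$ through $\gamma(t)$ is exactly $\sigma(s)$, so $\sigma$ is non-increasing and bounded below, hence convergent. With that step repaired (or replaced by a citation to \cite{RS} or \cite{K}, where the existence of this limit on non-compact harmonic manifolds is established), the rest of your argument is sound and coincides with the paper's.
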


\begin{proof}
If $M$ is a harmonic manifold, then from \eqref{def_volent} and \eqref{volBall},
the volume entropy is given by
\begin{equation*}
Q=\lim_{r\rightarrow \infty}
\dfrac{\log\left(\omega_{n-1}\int^r_0 \Theta(t)\,dt\right)}{r}=\lim_{r\rightarrow \infty}
\dfrac{\Theta'(r)}{\Theta(r)}.
\end{equation*}
Here, the second equality in the above equation is obtained by applying l'H\^{o}pital's rule twice.
As mentioned in section \ref{S_busemann},
a horosphere $H_v$ is obtained as the limit of the geodesic sphere $S(\gamma_v(t), t)$.
From Lemma \ref{dens_meancurv}, we find that the volume entropy $Q$ is the limit of the mean curvature of the geodesic sphere with the inward unit normal vector field.
Hence, we obtain \eqref{HessB_Q}.
\end{proof}


In section 1, we briefly mentioned the spherical Fourier transform, defined using spherical functions, which are eigenfunctions of the radial part $\Delta^{\mathrm{rad}}$ of the Laplace-Beltrami operator.
For any chosen $v \in UM$, if we set
\begin{equation*}
\varphi(x) = \exp\left\{-\left(\dfrac{Q}{2}-i\lambda\right) b_v(x)\right\},
\end{equation*}
then $\Delta \varphi(x) = \left(\dfrac{Q^2}{4}+\lambda^2\right) \varphi(x)$ holds.
From theorem \ref{equiv_harmonicmfd} (iv), we find that the function $\mathcal{M}_p\varphi$ is an eigenfunction of $\Delta^{\mathrm{rad}}$.
Therefore, we can define the spherical Fourier transform on a harmonic manifold.

\section{Hypergeomeric type}

In this section, we assume that $(M^n, g)$, $n \geq 3$ is a non-compact harmonic manifold with volume entropy $Q>0$.

\begin{definition}\label{HMHGT}
We call $M$ a harmonic manifold of \textit{hypergeometric type} when the eigenfunction equation \eqref{LEeq} of the radial part of the Laplace-Beltrami operator $\Delta^{\mathrm{rad}}$ on $M$ is transformed into the hypergeometric equation \eqref{HGeq} by a certain variable transformation $r=r(z)$.
\end{definition}

\begin{proposition}\label{HG_transf}
Suppose that there exists a variable transformation $z = z(r)$ which takes the equation \eqref{LEeq} with an arbitrary real parameter $\lambda$ associated to $\Phi(r)$ into the equation \eqref{HGeq} associated to a certain function $f(z)$. 
Then, it is concluded that for some constant $\ell>0$
\begin{enumerate}
\item $a = \bar{b} = \dfrac{1}{\ell} \left(\dfrac{Q}{2} \pm i \lambda\right)$ and $c= \dfrac{n}{2}$,
\item the transformation $z= z(r)$ must be $z = - \sinh^2 \dfrac{\ell r}{2}$.
\end{enumerate}
\end{proposition}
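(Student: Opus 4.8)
The plan is to read the hypothesis as a pure change of the independent variable, so that the radial eigenfunction regular at the origin is $\Phi(r)=f(z(r))$ with $f$ a solution of \eqref{HGeq}. Substituting $z=z(r)$ into \eqref{LEeq} and using $\Phi'=f'(z)z'$ and $\Phi''=f''(z)(z')^2+f'(z)z''$ would turn \eqref{LEeq} into $(z')^2 f''+(z''+\sigma z')f'+\bigl(\tfrac{Q^2}{4}+\lambda^2\bigr)f=0$. Dividing this by $(z')^2$ and comparing it with \eqref{HGeq} divided by $z(1-z)$ yields two matching identities, one for the coefficient of $f$ and one for the coefficient of $f'$,
\[
\frac{\tfrac{Q^2}{4}+\lambda^2}{(z')^2}=-\frac{ab}{z(1-z)},\qquad \frac{z''+\sigma z'}{(z')^2}=\frac{c-(a+b+1)z}{z(1-z)}.
\]

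The decisive observation is that the single substitution $z(r)$ must linearize \eqref{LEeq} into \eqref{HGeq} for \emph{every} real $\lambda$, whereas the eigenvalue $\tfrac{Q^2}{4}+\lambda^2$ and the parameters $a,b$ genuinely depend on $\lambda$. Rearranging the first identity gives $(z')^2=-\tfrac{1}{ab}\bigl(\tfrac{Q^2}{4}+\lambda^2\bigr)z(1-z)$; since the left-hand side and $z(1-z)$ are independent of $\lambda$ while $z$ is non-constant, the prefactor $-\tfrac{1}{ab}\bigl(\tfrac{Q^2}{4}+\lambda^2\bigr)$ must be a constant $\kappa$ independent of both $r$ and $\lambda$. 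Hence $(z')^2=\kappa\,z(1-z)$ and $ab=-\tfrac{1}{\kappa}\bigl(\tfrac{Q^2}{4}+\lambda^2\bigr)$. Differentiating the first relation gives $z''=\tfrac{\kappa}{2}(1-2z)$; substituting this and $(z')^2=\kappa z(1-z)$ into the second matching identity reduces it to $\sigma z'=\kappa(c-\tfrac12)-\kappa(a+b)z$. Because $\sigma,z,z'$ are $\lambda$-independent and $z$ is non-constant, comparing this at two points with distinct values of $z$ forces both $c$ and $a+b$ to be constants independent of $\lambda$.

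I would then determine $z$ by solving the separable ODE $(z')^2=\kappa z(1-z)$ with the boundary data dictated by the singular point $r=0$. Finiteness of $\sigma z'$ there, together with $\sigma(r)\sim(n-1)/r$ from \eqref{meancurvature}, forces $z'(0)=0$ and hence $z(0)\in\{0,1\}$; normalizing to the standard singular point $z(0)=0$, and using that $M$ is non-compact with $Q>0$ to exclude the bounded oscillatory branch $\kappa>0$ (which reaches the singular value $z=1$ at finite $r$), leaves $\kappa<0$. Writing $\ell=\sqrt{-\kappa}>0$, integration gives exactly $z=-\sinh^2\tfrac{\ell r}{2}$, proving (2). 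Feeding this back, the $r\to0$ limit of $\sigma z'=\kappa(c-\tfrac12)-\kappa(a+b)z$ with $\sigma\sim(n-1)/r$ gives $c=\tfrac{n}{2}$, while the $r\to\infty$ limit together with $\sigma(r)\to Q$ (Lemma \ref{meancurv_Q}) gives $a+b=\tfrac{Q}{\ell}$. Finally $a,b$ are the roots of $t^2-\tfrac{Q}{\ell}t+\tfrac{1}{\ell^2}\bigl(\tfrac{Q^2}{4}+\lambda^2\bigr)=0$, whose discriminant $-4\lambda^2/\ell^2$ is negative, so $a=\bar b=\tfrac{1}{\ell}\bigl(\tfrac{Q}{2}\pm i\lambda\bigr)$, which is (1).

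The main obstacle is the logical heart in the second paragraph: correctly exploiting that one fixed substitution must work uniformly in $\lambda$, which is what pins down $(z')^2=\kappa z(1-z)$ with a universal constant $\kappa$ and cleanly separates the $\lambda$-dependent datum $ab$ from the $\lambda$-independent data $c$ and $a+b$. Once that separation is in place, the rest—solving the separable equation, selecting the correct singular point and the non-compact branch, and reading off $c$ and $a+b$ from the two endpoint asymptotics of $\sigma$—is routine.
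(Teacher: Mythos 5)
Your proposal follows essentially the same route as the paper's proof: the substitution produces the paper's two matching conditions \eqref{firsteqn}--\eqref{secondeqn}, the transformation is pinned down by solving $(z')^2=\kappa\,z(1-z)$, the value $c=\frac{n}{2}$ comes from the $r\to 0$ asymptotics $\sigma(r)\sim\frac{n-1}{r}$ of \eqref{meancurvature}, the relation $a+b=\frac{Q}{\ell}$ comes from $\sigma(r)\to Q$ (Lemma \ref{meancurv_Q}), and $a,b$ are then the conjugate roots of the resulting quadratic. Your explicit observation that one fixed substitution must serve \emph{all} $\lambda$, so that $\kappa$, $c$ and $a+b$ are $\lambda$-independent while only $ab$ carries the $\lambda$-dependence, is a genuine clarification of a point the paper leaves implicit; your treatment of the alternative $z(0)=1$ by ``normalizing'' is the same move the paper makes via Lemma \ref{z->1-z}.

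There is, however, one genuine gap: the exclusion of the oscillatory branch $\kappa>0$ (equivalently $ab<0$), which is exactly where the paper spends half of its proof (its Case (i)). Your stated reason --- that this branch ``reaches the singular value $z=1$ at finite $r$,'' together with non-compactness and $Q>0$ --- is not an argument. The hypothesis only requires $z(r)$ to transform \eqref{LEeq} into \eqref{HGeq}; it does not require $z$ to be injective or to avoid the singular points of \eqref{HGeq}, so $z=\sin^2\bigl(\tfrac{\sqrt{\kappa}\,r}{2}\bigr)$ hitting $z=1$ at $r=\pi/\sqrt{\kappa}$ contradicts nothing by itself, and non-compactness is never actually brought into contact with the equations. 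The contradiction must be extracted from the second matching condition, and your own identity $\sigma z'=\kappa\bigl(c-\tfrac12\bigr)-\kappa(a+b)z$ suffices if you run it: for $\kappa>0$ the $r\to0$ limit still gives $c=\tfrac{n}{2}$, while evaluating the identity at $\sqrt{\kappa}\,r=2\pi$ (where $z=z'=0$ and $\sigma$ is finite, harmonic manifolds having their density and mean curvature defined for all radii in the paper's setting) forces $\kappa\bigl(c-\tfrac12\bigr)=0$, i.e.\ $c=\tfrac12$, a contradiction since $n\ge 3$; alternatively, evaluating along the two sequences $\sqrt{\kappa}\,r_k=\tfrac{\pi}{2}+2k\pi$ and $\sqrt{\kappa}\,r_k=\tfrac{3\pi}{2}+2k\pi$ (where $z=\tfrac12$ and $z'=\pm\tfrac{\sqrt{\kappa}}{2}$) and letting $k\to\infty$ with $\sigma(r_k)\to Q$ yields $Q\tfrac{\sqrt{\kappa}}{2}=-Q\tfrac{\sqrt{\kappa}}{2}$, hence $Q=0$, contradicting $Q>0$ --- this second version is the one that genuinely uses the hypothesis you invoked. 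The paper instead derives \eqref{equation-x} and obtains the contradiction by a sign analysis at suitably chosen radii; either way, this step must be written out, not asserted.
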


A similar claim was also proven in \cite[11. Appendix]{Itoh-S2020},
but we restate the statement and its proof with slight modifications here.

\begin{proof}
Suppose that $\Phi(r)$ is converted into $f(z)$ by $z=z(r)$ as $f(z(r))= \Phi(r)$.
Then, we have
\begin{equation*}
\frac{d\Phi}{d r} =\frac{d f}{d z}\, \frac{d z}{d r},\quad
\frac{d^2\Phi}{d r^2} =\frac{d^2 f}{d z^2}\left(\frac{d z}{d r}\right)^2 + \frac{d f}{d z}\, \frac{d^2 z}{d r^2},
\end{equation*}
so that
\begin{multline*}
\left(\frac{d^2}{dr^2}+ \sigma(r)\frac{d}{dr}\right) \Phi + \left(\frac{Q^2}{4}+\lambda^2\right) \Phi 
\\
= \frac{d^2 f}{d z^2}\, \left(\frac{d z}{d r}\right)^2 +  \left(\frac{d^2 z}{d r^2} + \sigma(r)\, \frac{d z}{d r} \right)\frac{d f}{d z}\, + \left(\frac{Q^2}{4}+\lambda^2\right)\, f = 0.
\end{multline*}
This equation must turn out to be the equation \eqref{HGeq}.
Hence, since $\frac{Q^2}{4}+\lambda^2 \not= 0$, it follows $ab \not=0$.
Therefore we get
\begin{align}\label{firsteqn}
\left(\frac{d z}{d r}\right)^2 
=& - \frac{\left(\frac{Q^2}{4}+\lambda^2\right)}{ab}\, z(1-z),\\
\label{secondeqn}
\frac{d^2 z}{d r^2} + \sigma(r)\, \frac{d z}{d r} 
=& - \frac{\left(\frac{Q^2}{4}+\lambda^2\right)}{ab}\,\{c- (a+b+1)z\}.
\end{align}
From the following considerations,
the solutions to the differential equation \eqref{firsteqn} can be considered in two ways depending on the sign of $ab$:
\begin{enumerate}
\item[(i)] when $ab<0$, $z=\sin^2\left(\frac{\ell r}{2}+C\right)$,
\item[(ii)] when $ab>0$, $z=-\sinh^2\left(\frac{\ell r}{2}+C\right)$,
\end{enumerate}
where $C$ is an arbitrary constant, and case (i) is not appropriate, and case (ii) with $C=0$ is appropriate for our argument.

\medskip

\noindent{\bf Case (i)}\, When $ab < 0$, one has from \eqref{firsteqn} $z(1-z) \geq 0$ and hence $0 \leq z \leq 1$.
Then, \eqref{firsteqn} is solved by setting $z = \sin^2 t$ for a function $t=t(r)$ and inserting it to have
\begin{equation*}
\frac{d z}{d r}= 2\sin  t \cos  t \cdot \frac{dt}{dr}
\end{equation*}
and
\begin{equation*}
\ell^2 z(1-z) = \ell^2 \sin^2 t \cos^2  t,\quad\mbox{where}\ 
\ell = \sqrt{-\dfrac{\frac{Q^2}{4}+ \lambda^2}{ab}}
\end{equation*}
and then $\left(\frac{dt}{dr}\right)^2 = \frac{\ell^2}{4}$, namely one may put $t = \frac{\ell r}{2}+C$ and $z= \sin^2\left( \frac{\ell r}{2}+C\right)$.
The left-hand side of \eqref{secondeqn} is then written
\begin{equation}\label{lefthand}
\frac{d^2 z}{d r^2} + \sigma(r)\, \frac{d z}{d r}
= \frac{\ell}{2}\left\{\ell \cos (\ell r+2C)+ \sigma(r)\sin (\ell r+2C)\right\},
\end{equation}
and the right-hand side is
\begin{equation}\label{righthand}
\ell^2\{c-(a+b+1)z\} = \ell^2 \left\{ c - \frac{1}{2}(a+b+1) + \frac{1}{2}(a+b+1) \cos (\ell r+2C)\right\}.
\end{equation}
When $r \rightarrow 0$, from \eqref{meancurv_Q},
so if $C\ne 0$ then \eqref{righthand} converges to a finite value,
but \eqref{lefthand} diverges.
Hence we find that the constant $C$ can be set to $0$.
If $C=0$, then \eqref{lefthand} tends to $\frac{\ell}{2}\{\ell + \ell(n-1)\} = \frac{\ell^2 n}{2}$ and \eqref{righthand} goes to $\ell^2 c$ as $r \rightarrow 0$ so that $c = \frac{n}{2}$.
Since \eqref{lefthand} equals \eqref{righthand}, one has
\begin{align}
\frac{\sigma(r)}{2\ell}\, \sin \ell r
=& c - \frac{1}{2} (a+b+1) + \frac{1}{2}(a+b)\cos \ell r \nonumber \\
=& \label{equation-x} \frac{n-1}{2} + \frac{1}{2} (a+b)(\cos \ell r - 1).
\end{align} 
If $a+b >0$, then we can choose $r$ such that $\frac{3\pi}{2} < \ell r < 2\pi$ and satisfying 
$\cos \ell r > 1-\frac{n-1}{a+b}$.
Then, the left hand of \eqref{equation-x} is negative, while the left hand is positive. This is a contradiction.

On the other hand, if $a+b \leq 0$, then choose $r > 0$ such that $\cos \ell r < \frac{1}{2}$ and $\sin \ell r < 0$ (i.e., $\frac{5\pi}{3}<\ell r < 2\pi$ ).
Then, the left-hand side of \eqref{equation-x} is negative, while the right-hand side is positive.
This is a contradiction.

\medskip

\noindent{\bf Case (ii)}\, When $a b > 0$, $z(1-z) \leq 0$, so either $z \leq 0$ or $z \geq 1$.
We get $z= -\sinh^2 \left(\frac{\ell r}{2}+C\right)$ for the case of $z \leq 0$ or $z = \cosh^2 \left(\frac{\ell r}{2}+C\right)$ for $z \geq 1$ in a similar manner as case (i). Here
\begin{equation}\label{ell}
\ell = \sqrt{\frac{\frac{Q^2}{4}+ \lambda^2}{ab}}>0.
\end{equation}
From the following lemma, even if we consider only the former, generality is not lost.

\begin{lemma}\label{z->1-z}
If $u(z)$ is a solution to \eqref{HGeq},
then $v(z):=u(1-z)$ is a solution to the hypergeometric differential equation of another type:
\begin{equation*}
z(1-z)\,f''(z)+\{c_1-(a_1+b_1+1)z\}\,f'(z)-a_1b_1\,f(z)=0,
\end{equation*}
$a_1=a$, $b_1=b$, $c_1=a+b+1-c$.
\end{lemma}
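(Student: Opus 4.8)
The plan is to carry out the substitution $w = 1-z$ directly in the hypergeometric equation \eqref{HGeq} and track how each coefficient transforms. Writing $v(z) = u(1-z)$ and setting $w = 1-z$, the chain rule gives $v'(z) = -u'(w)$ and $v''(z) = u''(w)$, since each differentiation in $z$ introduces a factor $dw/dz = -1$ and the second derivative restores the sign.

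First I would record that $u$ solves \eqref{HGeq} at the point $w$, namely
\begin{equation*}
w(1-w)\,u''(w) + \{c - (a+b+1)w\}\,u'(w) - ab\,u(w) = 0.
\end{equation*}
The key observation is that the leading coefficient is invariant under this substitution: $w(1-w) = (1-z)z = z(1-z)$. Substituting $u''(w) = v''(z)$, $u'(w) = -v'(z)$, $u(w) = v(z)$, and $w = 1-z$ then turns the displayed identity into an ODE for $v$ in the variable $z$, whose leading term is already the desired $z(1-z)\,v''(z)$.

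The remaining step is to expand the first-order coefficient. Replacing $w$ by $1-z$ in $c - (a+b+1)w$ and absorbing the sign coming from $u'(w) = -v'(z)$ produces the coefficient $(a+b+1-c) - (a+b+1)z$ in front of $v'(z)$, while the zeroth-order coefficient remains $-ab$. Comparing with the target equation I would read off $c_1 = a+b+1-c$, together with $a_1 + b_1 + 1 = a+b+1$ and $a_1 b_1 = ab$. Since the hypergeometric equation depends on $a,b$ only through the symmetric quantities $a+b$ and $ab$, the assignment $a_1 = a$, $b_1 = b$ is consistent with these relations and yields exactly the stated parameters.

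There is no genuine obstacle here: the argument is a routine change of variables, and the whole content is that the substitution $z \mapsto 1-z$ preserves the class of hypergeometric equations. The only point demanding care is the bookkeeping of signs in the chain rule, and in particular verifying that the new first-order coefficient is precisely $c_1 - (a_1+b_1+1)z$ with $c_1 = a+b+1-c$; dropping the sign from $u'(w)=-v'(z)$ would misidentify $c_1$ as $c-(a+b+1)$, i.e.\ the negative of the correct value.
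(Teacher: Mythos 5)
Your proof is correct. The paper states this lemma without proof, treating it as a standard fact (it is one of Kummer's classical transformations of the hypergeometric equation), and your direct change-of-variables computation---in particular the careful sign bookkeeping from $u'(w) = -v'(z)$ that yields $c_1 = a+b+1-c$ rather than $c-(a+b+1)$, and the observation that $a_1=a$, $b_1=b$ is consistent because the equation depends on $a,b$ only through $a+b$ and $ab$---is exactly the routine verification the paper leaves implicit.
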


Moreover, we find that $C$ must be $0$ in a similar manner as case (i).

The left hand of \eqref{secondeqn} is written
\begin{equation}\label{lefthand-2}
\frac{d^2 z}{d r^2} + \sigma(r)\, \frac{d z}{d r} = - \frac{\ell}{2}\left(\ell \cosh \ell r+ \sigma(r)\sinh \ell r\right)
\end{equation}
and the right hand is
\begin{equation}\label{righthand-2}
-\ell^2\{c-(a+b+1)z\} = -\ell^2 \left\{ c + (a+b+1) \sinh^2 \frac{\ell r}{2}\right\}.
\end{equation}
When $r\rightarrow 0$, \eqref{righthand-2} tends to $-\ell^2 c$.
On the other hand \eqref{lefthand-2} tends to $- \frac{\ell^2 n}{2}$.
Thus $c = \frac{n}{2}$.

From the equality of the right hand sides of \eqref{lefthand-2} and \eqref{righthand-2}, we obtain
\begin{equation}\label{prop_meancurv}
\sigma(r)=\dfrac{\ell}{\sinh \ell r}
\left\{
2c-(a+b+1)+(a+b)\cosh \ell r
\right\},
\end{equation}
so we have $(a + b) \ell = Q$,
since $\lim_{r\rightarrow\infty}\sigma(r) = \sigma_{H_v}=Q$ from lemma \ref{meancurv_Q}.
Therefore, from \eqref{ell}, we have $a = \frac{1}{\ell}\left(\frac{Q}{2} + i \lambda\right)$ and $b = \frac{1}{\ell}\left(\frac{Q}{2} - i \lambda\right)$.
\end{proof}

\begin{theorem}\label{volumedensmeancurv}
Let $(M^n, g)$ be of hypergeometric type.
Then the mean curvature $\sigma(r)$ and the volume density $\Theta(r)$ of a geodesic sphere are described respectively by
\begin{align}
\label{HG_meancurv}
\sigma(r)=& \frac{\ell(n-1)}{2}\coth\frac{\ell r}{2}
+\left\{Q-\frac{\ell(n-1)}{2}\right\}\tanh\frac{\ell r}{2},\\
\label{HG_density}
\Theta(r)=& \left(\frac{2}{\ell}\right)^{n-1} \left(\sinh\frac{\ell r}{2}\right)^{n-1}
\left(\cosh\frac{\ell r}{2}\right)^{\frac{2Q}{\ell}-(n-1)}.
\end{align}
\end{theorem}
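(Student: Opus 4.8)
The plan is to obtain both formulas directly from the data assembled in the proof of Proposition~\ref{HG_transf}, exploiting the fact (Lemma~\ref{dens_meancurv}) that the mean curvature is the logarithmic derivative of the volume density. First I would take the explicit mean curvature expression \eqref{prop_meancurv} established there,
\[
\sigma(r) = \frac{\ell}{\sinh\ell r}\left\{2c - (a+b+1) + (a+b)\cosh\ell r\right\},
\]
and insert the values $c = \frac{n}{2}$ and $(a+b)\ell = Q$ found in the same proof. Applying the double-angle identities $\sinh\ell r = 2\sinh\frac{\ell r}{2}\cosh\frac{\ell r}{2}$ and $\cosh\ell r = \cosh^2\frac{\ell r}{2}+\sinh^2\frac{\ell r}{2}$ together with $\cosh^2-\sinh^2=1$, the braced numerator collapses to $(n-1) + \frac{2Q}{\ell}\sinh^2\frac{\ell r}{2}$, and separating into $\coth$ and $\tanh$ contributions yields exactly \eqref{HG_meancurv}. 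This step is purely algebraic, the only care needed being the bookkeeping of hyperbolic identities.

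Next, since Lemma~\ref{dens_meancurv} gives $\sigma(r)=\Theta'(r)/\Theta(r)=(\log\Theta)'(r)$, the density is recovered by integrating \eqref{HG_meancurv}. The integral separates term by term: the $\coth$ part integrates to $(n-1)\log\sinh\frac{\ell r}{2}$ and the $\tanh$ part to $\bigl(\frac{2Q}{\ell}-(n-1)\bigr)\log\cosh\frac{\ell r}{2}$, so that
\[
\Theta(r) = K\left(\sinh\tfrac{\ell r}{2}\right)^{n-1}\left(\cosh\tfrac{\ell r}{2}\right)^{\frac{2Q}{\ell}-(n-1)}
\]
for an as-yet-undetermined multiplicative constant $K$.

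Finally I would pin down $K$ via the normalization in Lemma~\ref{derivativesofq}, namely $\lim_{t\to 0}\Theta(t)/t^{n-1}=1$. Since $\sinh\frac{\ell r}{2}\sim\frac{\ell r}{2}$ and $\cosh\frac{\ell r}{2}\to 1$ as $r\to 0$, one finds $\Theta(r)/r^{n-1}\to K(\ell/2)^{n-1}$, forcing $K=(2/\ell)^{n-1}$ and giving \eqref{HG_density}. I expect no genuine obstacle: the whole argument is a direct consequence of Proposition~\ref{HG_transf} and Lemmas~\ref{dens_meancurv} and~\ref{derivativesofq}, and the only mild subtlety is ensuring the integration constant is fixed by the short-distance ($t\to 0$) asymptotics supplied by Lemma~\ref{derivativesofq} rather than by any large-$r$ normalization.
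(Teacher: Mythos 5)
Your proposal is correct and follows essentially the same route as the paper: substitute the parameter values from Proposition~\ref{HG_transf} into \eqref{prop_meancurv} to obtain \eqref{HG_meancurv}, integrate the logarithmic derivative (Lemma~\ref{dens_meancurv}) to get \eqref{HG_density} up to a constant, and fix that constant from the normalization $\lim_{t\to 0}\Theta(t)/t^{n-1}=1$ of Lemma~\ref{derivativesofq}. The only cosmetic difference is that the paper fixes the constant via a full power-series expansion of $\Theta(r)$ (which it reuses later in the proof of the main theorem), whereas you use just the leading-order asymptotics of $\sinh$ and $\cosh$, which suffices for this step.
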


\begin{proof}
From the proof of proposition \ref{HG_transf},
it follows that the mean curvature of the geodesic spheres in $M$ is necessarily expressed in the form of \eqref{prop_meancurv}.
Since the parameters $a, b$ and $c$ are determined by proposition \ref{HG_transf} (1), an easy calculation shows that $\sigma(r)$ is given in the form \eqref{HG_meancurv}.

By Lemma \ref{dens_meancurv} and \eqref{HG_meancurv},
it is immediately obtained from straightforward integration that $\Theta(r)$ is given in the form \eqref{HG_density} with a certain constant $k$.
We can compute the coefficient $k$ as follows.
We write $\Theta(r)$ as
\begin{align*}
\Theta(r)=&k\left(\tanh\frac{\ell r}{2}\right)^{n-1}
\left(\cosh\frac{\ell r}{2}\right)^{\frac{2Q}{\ell}}
\end{align*}
and when we expand the right-hand side into series of $\tanh \frac{\ell r}{2}$ and $\left(\cosh \frac{\ell r}{2}\right)^{\frac{2Q}{\ell}}$, we have
\begin{align}
\Theta(r)=&k\left\{
\frac{\ell r}{2}-\frac{1}{3}\left(\frac{\ell r}{2}\right)^3+\cdots
\right\}^{n-1}
\left(1+\frac{Q}{2}\cdot\frac{\ell}{2}r^2+\cdots\right)\notag\\
=&k\left\{
\left(\frac{\ell}{2}\right)^{n-1}r^{n-1}-\frac{n-1}{3}\left(\frac{\ell}{2}\right)^{n+1}r^{n+1}+\cdots\right\}
\left(1+\frac{Q}{2}\cdot\frac{\ell}{2}r^2+\cdots\right)\notag\\
=&k\left\{
\left(\frac{\ell}{2}\right)^{n-1}r^{n-1}+\left(\frac{Q}{\ell}-\frac{n-1}{3}\right)\left(\frac{\ell}{2}\right)^{n+1}r^{n+1}+\cdots\right\}\label{volden-seqex}.
\end{align}
From \eqref{ledgerformula0}, we obtain $k=\left(\frac{2}{\ell}\right)^{n-1}$.
\end{proof}

Conversely, from the reverse argument of the proof of proposition \ref{HG_transf},
if $\sigma(r)$ is given in the form \eqref{HG_meancurv},
then it follows that $M$ is of hypergeometric type.
More generally, the following holds.

\begin{theorem}[cf. \cite{Itoh-S2020}]\label{voldensfctheoremHGT}
The following are equivalent each other.
\begin{enumerate}
\item $(M, g)$ is of hypergeometric type,
\item $\displaystyle \sigma(r)= c_1\coth\frac{\ell r}{2}
+c_2\tanh\frac{\ell r}{2}$,
\item $\displaystyle \Theta(r)= k \left(\sinh\frac{\ell r}{2}\right)^{c_1}
\left(\cosh\frac{\ell r}{2}\right)^{c_2}$.
\end{enumerate}
Here $\ell, k, c_1$ are some positive constants and $c_2$ is a constant satisfying $c_1+c_2>0$.
\end{theorem}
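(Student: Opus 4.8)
The plan is to prove the three implications (1) $\Rightarrow$ (2), (2) $\Leftrightarrow$ (3), and (2) $\Rightarrow$ (1); together these close the equivalence. The first is immediate: if $M$ is of hypergeometric type, then Theorem \ref{volumedensmeancurv} expresses $\sigma(r)$ exactly in the shape of (2), with $c_1 = \frac{\ell(n-1)}{2} > 0$ and $c_2 = Q - \frac{\ell(n-1)}{2}$, so that $c_1 + c_2 = Q > 0$. Thus (1) $\Rightarrow$ (2) carries no content beyond reading off the constants.

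For (2) $\Leftrightarrow$ (3), the bridge is Lemma \ref{dens_meancurv}, which gives $\sigma(r) = \Theta'(r)/\Theta(r) = (\log \Theta)'(r)$ for $r > 0$, so that the two statements are related by a single integration. Using $\int \coth\frac{\ell r}{2}\, dr = \frac{2}{\ell}\log\sinh\frac{\ell r}{2}$ and $\int \tanh\frac{\ell r}{2}\, dr = \frac{2}{\ell}\log\cosh\frac{\ell r}{2}$, integrating the expression in (2) produces $\Theta(r) = k\left(\sinh\frac{\ell r}{2}\right)^{2c_1/\ell}\left(\cosh\frac{\ell r}{2}\right)^{2c_2/\ell}$ with a positive integration constant $k$; conversely, the logarithmic derivative of the expression in (3) returns $\sigma$ in the form of (2). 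The positive factor $2/\ell$ merely rescales the exponents without altering signs, so the constraints $c_1 > 0$ and $c_1 + c_2 > 0$ transfer faithfully between the two formulations. (For a genuine harmonic manifold the small-$r$ asymptotics \eqref{ledgerformula0} and \eqref{meancurvature} further pin the $\sinh$-exponent to $n-1$ and hence $c_1 = \frac{\ell(n-1)}{2}$ and $k = (2/\ell)^{n-1}$, but this refinement is not required for the equivalence.)

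The substance of the theorem lies in (2) $\Rightarrow$ (1), the ``reverse argument'' indicated after Theorem \ref{volumedensmeancurv}. Assuming $\sigma$ has the form in (2), I would insert the candidate substitution $z = -\sinh^2\frac{\ell r}{2}$ of Proposition \ref{HG_transf}(2) into \eqref{LEeq} and check directly that it becomes \eqref{HGeq}. With $\frac{dz}{dr} = -\frac{\ell}{2}\sinh \ell r$ one verifies $\left(\frac{dz}{dr}\right)^2 = -\ell^2 z(1-z)$, recovering \eqref{firsteqn} with $ab = (Q^2/4 + \lambda^2)/\ell^2$, where $Q = c_1 + c_2 = \lim_{r\to\infty}\sigma(r)$. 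The decisive step is the $\frac{df}{dz}$-coefficient $\frac{d^2 z}{dr^2} + \sigma(r)\frac{dz}{dr}$: writing $\sinh \ell r = 2\sinh\frac{\ell r}{2}\cosh\frac{\ell r}{2}$ gives $\coth\frac{\ell r}{2}\cdot \sinh \ell r = 2\cosh^2\frac{\ell r}{2} = 2(1-z)$ and $\tanh\frac{\ell r}{2}\cdot\sinh \ell r = 2\sinh^2\frac{\ell r}{2} = -2z$, so the entire expression collapses to an affine function of $z$. This is exactly where the $\coth + \tanh$ form is essential, and it forces $c = \frac{1}{2} + \frac{c_1}{\ell}$ and $a + b = \frac{c_1+c_2}{\ell}$ to be constants independent of $r$, with $c$ in particular independent of $\lambda$. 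Hence \eqref{LEeq} transforms into \eqref{HGeq} for every real $\lambda$, and $M$ is of hypergeometric type.

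I expect the affine-collapse computation in (2) $\Rightarrow$ (1) to be the main obstacle, since for the transformed ODE to genuinely be the hypergeometric equation \eqref{HGeq} one needs $c$ to be independent of $\lambda$ while $a$ and $b$ carry the $\lambda$-dependence through the product $ab$; one must confirm that this split is precisely what the $\coth + \tanh$ form produces, with no residual $r$-dependence in the coefficients and no spurious $\lambda$ entering $c$. By contrast, the factor-of-$2/\ell$ bookkeeping in the exponents of (3) and the transfer of the sign constraints are routine.
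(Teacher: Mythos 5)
Your proposal is correct and follows essentially the same route the paper takes (and only sketches, deferring to \cite{Itoh-S2020}): (1) $\Rightarrow$ (2) by reading off Theorem \ref{volumedensmeancurv}, (2) $\Leftrightarrow$ (3) by integrating $\sigma = \Theta'/\Theta$ via Lemma \ref{dens_meancurv}, and (2) $\Rightarrow$ (1) by running Proposition \ref{HG_transf} in reverse with the substitution $z = -\sinh^2\frac{\ell r}{2}$. Your explicit computation showing that the $\coth+\tanh$ form collapses the first-order coefficient to an affine function of $z$, with $c = \frac{1}{2}+\frac{c_1}{\ell}$ independent of $\lambda$, correctly supplies the ``reverse argument'' that the paper leaves implicit.
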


\section{Proof of Theorem \ref{maintheorem}}

In this section, we provide the proof of our main theorem.
Let $(M, g)$ be a harmonic manifold of hypergeometric type with volume entropy $Q$ and the metric $g$ is normalized as $\mathrm{Ric}=-(n-1)$.
From theorem \ref{volumedensmeancurv},
the density function $\Theta(r)$ is given by \eqref{HG_density}.

The proof that the volume entropy of $M$ is bounded from above follows the same argument as \cite[Theorem 1.7]{Itoh-S2020}.
According to the Bishop volume comparison theorem \cite[IV, Theorem 3.1 (2) b)]{Sakai},
the density function satisfies the following inequality $\Theta(r)\le \left(\sinh r\right)^{n-1}$,
where the right-hand side is the density function of a hyperbolic space $\mathbb{R}H^n(-1)$ with sectional curvature $-1$.
By examining its behavior as $r\rightarrow \infty$,
we obtain $Q \le n - 1$.
Equality holds if and only if $M$ is isometric to $\mathbb{R}H^n(-1)$ (see \cite[IV, Corollary 3.2. (2)]{Sakai}).

Next, we show that the volume entropy of $M$ is bounded from below.
From the Ledger's formula \eqref{ledgerformulaelse} and \eqref{volden-seqex},
we have
\begin{equation*}
-\dfrac{1}{3}\mathrm{Ric}_g=\left(\dfrac{2}{\ell}\right)^{n-1}\cdot \left(
\dfrac{Q}{\ell}-\dfrac{n-1}{3}
\right)\left(\dfrac{\ell}{2}\right)^{n+1},
\end{equation*}
Hence we have
\begin{equation*}
Q=\left(\ell+\dfrac{2}{\ell}\right)\dfrac{n-1}{3},
\end{equation*}
from the AM--GM inequality, so that $Q\ge 2\sqrt{\ell\cdot\frac{2}{\ell}}\cdot\frac{n-1}{3}=\frac{2\sqrt{2}}{3}(n-1)$.

\section{The volume entropy of Damek-Ricci spaces}\label{DR_4ex}

The upper bound of the volume entropy is obtained from the volume comparison theorem,
and the space that attains this value is determined.
However, the lower bound is obtained straightforwardly from the simple AM-GM inequality.
Is this lower estimation sharp?
In this section, we show that among Damek-Ricci spaces,
there exist only four examples where the volume entropy attains its lower bound.

First, let us provide a brief introduction of Damek-Ricci spaces.
See \cite{BTV} for details.
A Damek-Ricci space $S$ is a one-dimensional extension of a 2-step nilpotent group $N$,
called the generalized Heisenberg group, with a certain left-invariant metric $g$.
Let $\mathfrak{z}$ be the center of the Lie algebra $\mathfrak{n}$ of $N$ and $\mathfrak{v}$ be its orthogonal complement.
By the definition of the generalized Heisenberg group, $\mathfrak{z}$ has a Euclidean structure with a quadratic form $q$, which is defined as $q(Z):=-\langle Z, Z\rangle$ using the inner product, and $\mathfrak{v}$ is a Clifford module over the Clifford algebra $Cl(\mathfrak{z},q)$.
From the classification of representations of Clifford algebras,
we obtain the classification $\mathfrak{n} = \mathfrak{v} \oplus \mathfrak{z}$.
In particular, $k:= \dim \mathfrak{v}$ and $m:= \dim \mathfrak{z}$ can be classified as shown in the table of \cite[p.23]{BTV}.

A Damek-Ricci space $(S, g)$ is a harmonic manifold whose Ricci tensor is
\begin{equation}\label{DR_Ric}
\mathrm{Ric}=-\left(m+\frac{k}{4}\right)g.
\end{equation}
Since the volume density $\Theta(r)$ is given by
\begin{equation*}
\Theta(r)=2^{k+m} \left(\sinh\frac{r}{2}\right)^{k+m}
\left(\cosh\frac{r}{2}\right)^m,
\end{equation*}
$S$ is of hypergeometric type, and its volume entropy is $Q=\frac{k}{2}+m$ \cite[(1.16)]{ADY}.
Since, from \eqref{DR_Ric}, the Ricci tensor equals
\begin{equation*}
\mathrm{Ric}
=-(k+m)\cdot\frac{m+\frac{k}{4}}{k+m}g
=-(n-1)\cdot\frac{m+\frac{k}{4}}{k+m}g,
\end{equation*}
so rescaling $g\mapsto c^2g$ for $c=\sqrt{\dfrac{\frac{k}{4}+m}{k+m}}$,
we have $\mathrm{Ric} = -(n-1)$.
Furthermore, from lemma \ref{Q_rescaling},
the volume entropy for this metric is given by
\begin{equation*}
Q=\left(m+\frac{k}{2}\right)\sqrt{\dfrac{k+m}{\frac{k}{4}+m}}.
\end{equation*}
Consider the case where this value satisfies
\begin{equation*}\label{Qinf}
\left(m+\frac{k}{2}\right)\sqrt{\dfrac{k+m}{\frac{k}{4}+m}}
=\dfrac{2\sqrt{2}(n-1)}{3}
=\dfrac{2\sqrt{2}(k+m)}{3}.
\end{equation*}
Squaring both sides of the above equation and rearranging,
we obtain $\left(m-\frac{k}{2}\right)^2=0$,
so we have $k=2m$.
From the classification table of the generalized Heisenberg algebra in \cite[p.23]{BTV},
we find that there are only four cases $m=1, 2, 4, 8$.

\begin{remark}
In the case of $m = 1$,
$S$ is the complex hyperbolic space $\mathbb{C}H^2$ \cite[p.79]{BTV}.
In the case of $m = 2$,
$S$ has seven dimensions, which is the lowest dimension of non-symmetric Damek-Ricci spaces \cite[p.110]{BTV}.
\end{remark}

The forthcoming challenge is to establish a general characterization of spaces that yield the lower bound for the volume entropy.

\section*{Acknowledgements}

The author would like to express gratitude to Prof. Hemangi M. Shah for providing the opportunity to stay at the Harish-Chandra Research Institute in India from February 23rd to March 4th, 2024.
During the visit, fruitful discussions were held on the topics covered in this paper as well as related subjects.
Additionally, the author appreciates the opportunity to deliver lectures on this topic.
The author is also grateful to the referee for carefully reading the manuscript and providing valuable comments, which helped improve the clarity and quality of this paper.

\end{document}